\newcommand{%
    \def\svgwidth{0.4\columnwidth}
    \import{./figures/}{.pdf_tex}
}[1]{%
    \def\svgwidth{0.4\columnwidth}
    \import{./figures/}{#1.pdf_tex}
}
 \newcommand{\eps}{\varepsilon}
 \newcommand{\Ber}{\mathrm{Ber}}
 \newcommand{\Om}{\Omega}
 \renewcommand{\O}[1]{\mathcal{O}\Rnd{#1}}
 \newcommand{\D}{\Delta}
 \newcommand{\al}{\alpha}
 \newcommand{\be}{\beta}
 \newcommand{\Exp}[1]{\exp\left(#1\right)}
 \newcommand{\Frac}[2]{\left(\dfrac{#1}{#2}\right)}
 \renewcommand{\Box}[1]{\left[#1\right]}
 \newcommand{\Rnd}[1]{\left(#1\right)}
 \newcommand{\E}[1][\@nil]{%
 	\def\tmp{#1}%
 	\ifx\tmp\@nnil
 	\mathbb{E}
 	\else
 	\mathbb{E}\left[#1\right]
 	\fi}
 \renewcommand{\P}[1][\@nil]{%
 	\def\tmp{#1}%
 	\ifx\tmp\@nnil
 	\mathbb{P}
 	\else
 	\mathbb{P}\Rnd{#1}
 	\fi}
 \newcommand{\mnorm}[1]{{\left\vert\kern-0.25ex\left\vert\kern-0.25ex\left\vert #1 
 		\right\vert\kern-0.25ex\right\vert\kern-0.25ex\right\vert}}
 \newtheorem{thm}{Theorem}
 \newtheorem{lem}[thm]{Lemma}
 \newtheorem{defn}{Definition}
 \theoremstyle{remark}
 \newcommand{\1}{\mathbf{1}}
 \newcommand{\floor}[1]{\lfloor #1 \rfloor}
 \newcommand{\cG}{\mathcal{G}}
 \newcommand{\Bin}{\mathrm{Bin}}
 \newcommand{\f}[2]{\frac{#1}{#2}}
 \newcommand{\df}{\stackrel{\text{def}}{=}}
 \newcommand{\rlem}[1]{Lemma \ref{#1}}
 \newcommand{\ceil}[1]{{\lceil #1 \rceil}}
 \renewcommand{\floor}[1]{{\lfloor #1 \rfloor}}
 \newcommand{\poly}{\mathrm{poly}}
 \newcommand{\nul}{\varnothing}
\newcommand{\fix}[1]{#1}
\newcommand{\fixr}[1]{}
\newcommand{\ER}{ Erdős–Rényi }
\renewcommand{\o}{\widetilde{o}}
\renewcommand{\O}{\widetilde{O}}
\renewcommand{\Om}{\widetilde{\Omega}}
\def\G{{G^*}}
\newcommand{\tE}{\widetilde{E}}
\newcommand{\Q}{Q}
\newcommand{\pln}{\poly\log n}
\begin{document}
\title[Upper tails in Poisson regime]{Universality in prelimiting tail behavior for regular subgraph counts in the Poisson regime}
\author{Mriganka Basu Roy Chowdhury}
% \address{Department of Statistics, University of California, Berkeley}
\date{}

\begin{abstract}
    \footnotesize
    Let $N$ be the number of copies of a \fix{small subgraph} $H$ in an\ER graph $G \sim \cG(n, p_n)$ where $p_n \to 0$ is chosen so that \fix{$\E N = c$, a constant}. \fix{Results of Bollobás \cite{bollobas} show} that for regular graphs $H$, the count $N$ weakly converges to a Poisson random variable. For large but finite $n$, and for the specific case of the triangle, investigations of the upper tail $\P(N \geq k_n)$ by \cite{ganguly} revealed that there is a phase transition in the \fix{tail behavior and the associated mechanism}. Smaller values of $k_n$ correspond to disjoint occurrences of $H$, leading to Poisson tails, with a different behavior emerging when $k_n$ is large, guided by the appearance of an almost clique. We show that a similar phase transition  also occurs when $H$ is any regular graph, at the point where $k_n^{1 -2/q}\log k_n = \log n$ ($q$ is the number of vertices in $H$). This establishes universality of this transition, previously known only for the case of the triangle.
\end{abstract}

\maketitle{}

\tableofcontents
\singlespacing

\normalsize
\section{Introduction}
%{{{1
Let $\cG(n, p)$ be the\ER random graph where each edge is present independently with probability $p$. Over the last few decades, various properties of these graphs have been investigated, resulting in a very fine understanding of this model. For a fixed\fixr{small} graph $H$, let $\Q_H(G)$ be the number of copies of $H$ in $G \sim \cG(n, p)$. Starting with the seminal paper of Erdős and Rényi \cite{erdosrenyi}, the distribution of $\Q_H(G)$, both for fixed $p$ and for $p$ varying with $n$ at different rates, has been the subject of intense research. For example, the probability of $\Q_H(G) > 0$ was studied by \cite{schurger1979limit} where $H$ was a complete graph. This work was later extended to other subgraphs by Bollobás \cite{bollobas}. In addition to this,  in many cases we also know asymptotic distributions of $\Q_H(G)$. A criterion for asymptotic normality of $\Q_H(G)$ was obtained by Ruciński \cite{rucinski1988small}. This however must exclude the regime of $p_n$ where we expect $\Theta(1)$ copies of the subgraph in expectation where asymptotic normality cannot hold. A natural guess for the limiting distribution in this case is Poisson, but it turns out that it is not true in general. However, for the large class of \emph{strictly balanced graphs} (which includes all regular graphs), asymptotic Poisson behavior was established by Bollobás \cite{bollobas} (see the discussion at the beginning of section \ref{sec:lowerbounds} for the definition of strictly balanced graphs).

A particularly active direction of research has been the study of the upper tail probability $\P(\Q_H(G) \geq k_n)$, the so-called ``infamous upper tail''. The seminal paper of Chatterjee and Varadhan \cite{chavar} achieved asymptotically sharp estimates on this for the case of fixed $p_n = p > 0$, using the theory of dense graph limits and graphons (see \cite{lovasz:book} or \cite{chatterjee:book}) as well as the celebrated Szemerédi regularity lemma \cite{szemeredi}, by reducing the computation of the upper tail probability to a natural ``mean-field'' variational problem. However due to poor quantitative estimates in the regularity lemma, arguments in \cite{chavar} could only be extended to $p_n$ decaying as a negative power of $\log n$, see \cite{lubetzkyzhaovariational}. The most widely studied case is of $H = K_3$, the triangle. Here the arguments of Chatterjee and Dembo \cite{chadem} and Lubetzky and Zhao \cite{lubetzkyzhaovariational} extended previously known results down to $p_n \geq n^{-1/42}\log n$. However the mean-field variational formulation is expected to hold as long as $p_n \gg \log n / n$. After a series of improvements  (Augeri \cite{augeri2018nonlinear}, Cook and Dembo \cite{cook2020large}, Eldan \cite{eldan2018gaussian}),   a breakthrough by Harel, Mousset and Samotij \cite{samotij} essentially solved the upper tail problem in the regime $p_n \gg 1/n$, using methods inspired by the classical moment arguments of Janson, Oleszkiewicz and Ruciński \cite{janson2004upper}, leaving the interesting case of $p_n = \Theta(1/n)$ still open. 
Sharp understanding of the upper tail (for the case of $K_3$) in this sparse regime was finally achieved by Ganguly, Hiesmayr and Nam \cite{ganguly} and Chakraborty, \fix{van der Hofstad} and den Hollander \cite{suman}, thereby completing our understanding of the upper tail in the case of the triangle. 

\fix{Our interest is primarily in an analogous regime for general regular graphs $H$, where $p_n \to 0$ in such a way that $\E \Q_H(G) = \Theta(1)$ as $n \to \infty$}. To see what this rate of decay of $p_n$ might be, let $H$ be a $q$-vertex $\D$-regular graph. Observe that the expected number of copies of $H$ in $G$ is 
\begin{align*}
    \binom{n}{q} p_n^{-q\D/2}
\end{align*}
up to constants depending only on $H$ \fixr{It therefore follows from elementary inequalities}. Since $(n/q)^q \leq \binom{n}{q} \leq (ne/q)^q$, the choice $p_n = \Theta(n^{-2/\D})$ ensures that this quantity is $\Theta(1)$. As discussed earlier, the results of \cite{bollobas} show that $\Q_H(G)$ is asymptotically Poisson in this case (hence the term ``Poisson regime''). However, the arguments there do not yield effective quantitative prelimiting estimates. In the case of $H = K_3$, the triangle, the prelimiting upper tail was investigated in \cite{ganguly}, where it was shown that the behavior of $\P(\Q_H(G) \geq k_n)$ \fix{undergoes a transition when $k_n$ is around $O(\log^3 n)$ (that the behavior after this transition persists till the maximum of $k_n = \Theta(n^3)$ was proved in \cite{suman}). The transition is as follows: when $k_n$ is smaller than the threshold, the dominant mechanism is the occurrence of many disjoint copies, but when $k_n$ is above the threshold, the mechanism changes to having an (almost) clique. }

In this paper we show that this phase transition also happens when $H$ is any regular graph. As above, we will fix $H$ to be a connected $q$-vertex $\D$-regular graph (with $q \geq 3$) throughout this paper. Our main result is the following.
\begin{thm}
    \label{thm:main}
    Let $k_n \geq 2$ and $p_n = n^{-2/\D}$. If $G \sim \cG(n, p_n)$, the probability of $Q_H(G) \geq k_n$ satisfies
    \begin{align*}
        C_1\Exp{-C_2 L_n} & \leq \P(\Q_H(G) \geq k_n) \leq C_3\Exp{-C_4 L_n}, \quad \text{where,} \\
        L_n & \df \min(k_n \log k_n, k_n^{2/q}\log n).
    \end{align*}
    Here the constants $C_1, C_2, C_3, C_4$ that only depend on $H$. (This choice of $p_n = n^{-2/\D}$ ensures that $\E \Q_H(G) = \Theta({1})$.) 
\end{thm}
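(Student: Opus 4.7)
The proof splits into upper and lower bounds, each of which further splits by which of the two terms in $L_n$ dominates; the transition happens when $k_n^{1-2/q}\log k_n \asymp \log n$. I describe the \emph{lower bound} strategy first. In the disjoint-copy regime ($k_n \log k_n \leq k_n^{2/q}\log n$), I would count unordered $k_n$-element families of pairwise vertex-disjoint copies of $H$ in $G$; call this count $N$. Since $p_n = n^{-2/\Delta}$ is calibrated so that $\E \Q_H = \Theta(1)$, a direct enumeration gives $\E N = c_H^{k_n}/k_n!\cdot(1+o(1))$ for a constant $c_H > 0$. Vertex-disjoint copies have weakly correlated indicators, so an overlap computation yields $\E N^2 = O(\E N + C^{k_n}(\E N)^2)$, and Paley--Zygmund then produces $\P(N \geq 1) \geq \exp(-C' k_n \log k_n)$. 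In the clique regime, I would plant a clique: take $m = \lceil C_H k_n^{1/q}\rceil$ with $C_H$ chosen so $K_m$ contains at least $k_n$ copies of $H$ (possible since $K_m$ has $\binom{m}{q}\cdot q!/|\aut(H)| = \Theta(m^q)$ copies); then $\P(\Q_H \geq k_n)$ is at least $p_n^{\binom{m}{2}} = \exp(-\Theta(k_n^{2/q}\log n))$, the probability that a particular $m$-vertex subset induces a clique.

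For the \emph{upper bound} in the disjoint-copy regime, I would use the factorial moment method: expanding $\E\binom{\Q_H}{k_n}$ as a sum over unordered $k_n$-subsets of distinct copies, and grouping the terms by their joint overlap pattern, strict balancedness of $H$ (implied by regularity) ensures every non-disjoint pattern is penalized via the quantity $v - (2/\Delta)e$ of the overlap graph. So the disjoint pattern dominates, giving $\E\binom{\Q_H}{k_n} \leq (C/k_n)^{k_n}$ and hence, by Markov's inequality, $\P(\Q_H \geq k_n) \leq \exp(-C' k_n \log k_n)$.

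The main obstacle is the upper bound in the clique regime, where a pure moment bound fails: overlapping tuples supported on almost-cliques contribute substantially. My plan is to show that $\{\Q_H \geq k_n\}$ essentially forces a \emph{dense core}, a vertex subset $S$ with $|S| = O(k_n^{1/q})$ that induces $\Omega(k_n^{2/q})$ edges. The edge lower bound comes from a Kruskal--Katona-type extremal statement: a graph with $e$ edges has at most $O(e^{q/2})$ copies of $H$, so harboring $\Omega(k_n)$ copies demands $\Omega(k_n^{2/q})$ edges. A union bound over the choice of $S$ and its induced edge set then gives
\begin{align*}
\P(\Q_H \geq k_n) \leq \exp\bigl(O(k_n^{1/q}\log n) - \Omega(k_n^{2/q}\log n)\bigr) = \exp(-\Omega(k_n^{2/q}\log n)),
\end{align*}
since the edge cost $\Omega(k_n^{2/q})\cdot \log(1/p_n) = \Omega(k_n^{2/q}\log n)$ dominates the vertex-choice cost $O(k_n^{1/q}\log n)$ once $k_n$ is large, which holds throughout the clique regime. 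The delicate step is extracting this core $S$ from $\{\Q_H \geq k_n\}$: copies of $H$ can overlap arbitrarily, and isolating the structural cause of large $\Q_H$ will require an iterative clustering/peeling scheme, which I expect to adapt from the triangle-specific analysis of \cite{ganguly} and the moment-with-spread techniques of \cite{samotij}.
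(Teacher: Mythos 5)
Your lower bound plan and the upper bound in the disjoint regime are plausible alternatives (the paper instead uses a two-stage sampling argument for the lower bound and a BK-inequality/spanned-graph decomposition for the small-$k_n$ upper bound), but the upper bound in the clique regime has a genuine gap that the paper itself anticipates and works hard to avoid.

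The claim that ``$\{\Q_H\geq k_n\}$ essentially forces a dense core $S$ with $|S| = O(k_n^{1/q})$'' is false. The edge count $\Omega(k_n^{2/q})$ is correct (and the paper proves it via Finner's inequality, Lemma \ref{lem:holder}), but the vertex count can be as large as $\Theta(k_n^{2/q})$, not $O(k_n^{1/q})$. The paper's prototypical counterexample is $H = C_4$ ($q=4$) with $k_n\sim n$: the complete bipartite graph $K_{2,m}$ with $m = \Theta(\sqrt n) = \Theta(k_n^{1/2})$ qualifies as a core, having $\Theta(m^2) = \Theta(k_n)$ copies of $C_4$ and $\Theta(k_n^{2/q})$ edges, but $m+2 = \Theta(k_n^{2/q})$ vertices. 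For such cores, the vertex-choice cost $\exp(|S|\log n) = \exp(\Theta(k_n^{2/q}\log n))$ is the \emph{same} order as the probability $p_n^{e(S)} = \exp(-\Theta(k_n^{2/q}\log n))$, so the proposed union bound degenerates into a game of constants that doesn't close. (And this is precisely the case where any slack in the constants kills the argument: the exponents match exactly.)

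The paper's resolution is considerably more structural. It first shows (Lemma \ref{lem:product}, via a careful two-sided application of Finner's inequality in Lemma \ref{lem:fixedholder}) that in a core every edge $(a,b)$ satisfies $d_a d_b \gtrsim k_n^{2/q}$ and every vertex has degree at least $\Delta$. This forces all edges to touch the set $L$ of high-degree vertices, which does have size $O(k_n^{1/q})$; the many low-degree vertices in $R$ can only attach to very high-degree vertices in $L$. The dyadic decomposition of Lemma \ref{lem:many} then pays for the low-degree vertices not by enumerating them directly but by enumerating their (small) neighbor sets inside $L'$, exploiting the $\geq \Delta$ minimum degree to gain a crucial factor of $1/\Delta$ in the vertex-count-versus-edge-probability tradeoff. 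This is the step your proposal is missing, and the peeling/clustering analogy from \cite{ganguly} will not substitute for it, since that argument relies on $v(S) - e(S)$ being controlled for triangle-spanned graphs, which is the wrong quantity for $\Delta > 2$.
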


It is easy to extend this result for $p_n = c' n^{-2/\D}$ for some constant $c'$, but we make this choice for notational simplicity. Theorem \ref{thm:main} almost achieves the sharp phase transition at $k_n$ satisfying $k_n^{1 - 2/q} \log k_n = \log n$, as obtained in \cite{ganguly} for the special case of $H = K_3$.

{\bf Note:} Throughout this paper, a \emph{copy} of a graph $H$ in $G$ is counted only up to automorphisms. Equivalently, it is a subset of edges of $G$ forming a graph isomorphic to $H$. Of course, even if we did count automorphisms separately, the form of our result would not change due to the presence of $H$ dependent constants.

\vskip1em
\noindent\textbf{Acknowledgements: } I would like to thank my advisor Prof. Shirshendu Ganguly for suggesting the problem and for numerous discussions and helpful insights. I also had several interesting and informative discussions with Ella Hiesmayr during the initial stages of this project.
%}}}1
\vskip2em
\renewcommand{\H}{\textsf{H}}
\section{Idea of proof}
%{{{
\label{idea}
\fix{We begin by setting up the notations. For any graph $G$, we will use  $V(G), E(G)$ to denote the vertex set and the edge set respectively, while $v(G), e(G)$ will be the respective sizes of these sets. We will write $d_v$ to denote the degree of a vertex $v$. $C = C(H)$ will always denote a constant that depends only on $H$.}

There are two main subparts to the proof. The first part deals with the case when $k_n$ is small, and the second part is for the case of large $k_n$. It turns out that arguments in the first part works for all $k_n \leq n^c$ for some small $c$ that depends on $H$, and the second part works for all $k_n$ that grow faster than everything that is $\poly \log n$. It is then clear that these two proofs cover all possible values of $k_n$. The phase transition will occur in the former case, while the purpose of the latter is to show that the behavior shown in the first part persists after the phase transition all the way up to the maximum possible value  of $k_n$.

The proof for the case of small $k_n$ is via a reduction to the analysis of the structure of \fix{``spanned graphs''}. This is inspired by \cite{ganguly} which treats the same problem in the case of the triangle. However, informally, that paper uses the fact that since the critical probability for a triangle is $p_n = O(1/n)$, the probability that a given subgraph with $v$ vertices and $e$ edges occurs as an isomorphic copy is at most
\begin{align*}
    \binom{n}{v} p_n^{-e} \leq n^{-(e - v)}.
\end{align*}
Thus this probability is directly related to the quantity $e - v + 1$ which can be interpreted as the number of ``excess edges'' in the subgraph after a spanning  tree has been chosen. Such an idea fails in our case because for a general $\D$-regular graph the critical probability is  instead
$p_n = n^{-2/\D}$, so that the above bound is now $n^{-(2e/\D - v)}$, for which a similar interpretation is not available. 
To circumvent this issue,
we prove that even in this general case, any ``spanned graph'' $G'$ must satisfy $2e(G')/\D - v \geq Ce(G')$ for some constant $C$, see \rlem{lem:spannedstruct}. Armed with this inequality, we bound the probability that the graph contains a certain spanned graph $G'$ which has many copies of $H$. Due to the last inequality, this is now related to the minimal number of edges in a candidate $G'$ containing, say, $\ell$ copies of $H$, \fix{which is at least $\ell^{2/q}$ up to constants (due to \rlem{lem:holder}).} It turns out that the concavity of this map $\ell \mapsto \ell^{2/q}$ implies that
it is either optimal to have all the copies be in a \fix{large spanned component,} or occur disjointly, depending on how large $k_n$ is. As a consequence of this dichotomy, we obtain our phase transition. \fix{However, a formal proof requires one to also control the entropy stemming from the number of possible ways to distribute $k_n$ copies of $H$ among spanned graphs.} This is \fix{done using a dyadic} decomposition argument where we classify spanned components into logarithmic many collections depending on the number of copies they contain. See \rlem{lem:few} for more details.

In the case of large $k_n$, the main idea is that if a graph has too many copies of $H$, then there must occur certain subgraphs, such that the expected count of $H$ conditioned on their occurrence in the random graph is near optimal (see \rlem{lem:seed} for more details). \fix{This idea first appeared in \cite{samotij} (who called such subgraphs ``cores''),} and was adopted by \cite{suman} to prove a result similar to ours but only for the triangle. However a n\"aive implementation of the approaches found in these papers fail in our case. As \rlem{lem:seed} will show, the upper tail bound essentially boils down to upper bounding \emph{the occurrence probability of a core}. This involves an efficient union bound over all cores. To illustrate the difficulties involved, let us see why a n\"aive direct count approach already fails for the case of $H = C_4$, the cycle on 4 vertices. In the  case $k_n \sim n$, it turns out that all the complete bipartite graphs $K_{2, m}$ 
    (i.e. 2 vertices on one side) with $m = O(\sqrt{n})$ qualify as cores.
Note that their count is of the order of $\binom{n}{m} = \exp(O(\sqrt{n} \log n))$ whose exponent exactly matches the probability bound of $C'\exp(-C\sqrt{n}\log n)$ as claimed by Theorem \ref{thm:main} in this case. This reduces the direct union bound approach to a delicate game of constants.

\fixr{It is worth noting that the ideas of \cite{samotij} extend to the completely sparse regime, as was demonstrated by \cite{suman}.}

Our proof avoids these issues, and instead crucially uses two estimates:
    \begin{enumerate}
        \item \fix{an upper bound on the number of copies of $H$ in another graph $G$ in terms of $e(G)$}, 
        \item and, a lower bound on the product of degrees $d_u d_v$ for  all edges $(u, v)$ in a core. 
    \end{enumerate}
    It turns out that for simple graphs $H$, these estimates can follow from direct combinatorial arguments that exploit the specific structure of $H$. For example, in the case of cycles, a standard spectral argument is sufficient to deduce an (essentially optimal) upper bound on the number of copies of it in a graph with $e$ edges. Further, the inequality involving the product of degrees can also be established via direct combinatorial methods.  We expect other simple classes of graphs to also yield to similar arguments. Note that this is similar in spirit to \cite{suman} whose analysis also crucially relied on the geometry of the triangle. But since we wish to deal with all regular graphs at once, we must employ alternate means to deduce information about the  structure of these cores. 
    \fix{For this reason, we develop a novel strategy invoking \emph{Finner's inequality} \cite{finner}. This is a form of H\"older's inequality which has proved to be particularly useful in the context of graphons, as demonstrated, for example, in \cite{lubetzky2015replica}} (see section \ref{sec:holder}). A straightforward application of this inequality yields the desired bound on the number of copies of $H$ in a graph with $e$ edges, as proved in \rlem{lem:holder}. But this alone is not sufficient for the proof. A much more involved analysis also allows us to also prove bounds on the expected count of $H$ in a graph containing a specific edge (\rlem{lem:fixedholder}). These results allow us to not 
only show that cores themselves contain many copies of $H$ (\rlem{lem:manycopies}), but also that each edge in the core has the interesting 
property of having a large product of degrees (\rlem{lem:product}), as asserted in point (2) above. All these facts are finally combined in the dyadic decomposition argument of \rlem{lem:many},
which essentially exploits the fact that since the product of degrees is high, very low degree vertices can only have edges to vertices with very high degree. Since there are few vertices with very high degree, this allows us to cheaply account for the neighbor sets of these low degree vertices, which are
the main sources of trouble in the proof. 

We end this section with a word about asymptotic notation. In addition to the usual $O(\cdot), \Omega(\cdot), \Theta(\cdot)$, we also use $\O(g(n)) \ni f(n)$ to mean that $f(n) / \pln = O(g(n))$, and $\Om(g(n)) \ni f(n)$ to mean that $f(n)\pln = \Om(g(n))$. Finally, $\o(g(n)) \ni f(n)$ if and only if $f(n)h(n) = o(g(n))$ for every $h(n)$ that is $\pln$. So, for example, $n^{0.9} = \o(n)$ but $n/\log^{100} n \neq \o(n)$. In fact $n/\log^{100} n = \Om(n)$.
%}}}

\section{Lower bounds}
\label{sec:lowerbounds}
%{{{1
% {\color{blue}Change ``lower bound on $x$'' to just ``$x$''.}

This short section collects the necessary results allowing us to assert the lower bound in our main theorem. 

Define the \emph{density} of a graph $H$ to be the ratio $d(H) \df e(H)/v(H)$ (see \cite[Chapter 3]{janson2011random:book}), and call a connected graph \emph{strictly balanced} if for any proper subgraph $H' \subsetneq H$, $d(H') < d(H)$. It is easy to see that $\D$-regular graphs are strictly balanced, because the entire graph has density $\D/2$, but any proper subgraph must have some vertex with degree $< \D$, so that the density is strictly smaller than $\D/2$. By the Poisson limit theorem for subgraph counts at the threshold\footnote{This terminology is often used in the literature to mean a choice of $p_n$ for which $\E \Q_H(\cG(n, p_n)) = \Theta(1)$. } due to Bollobás \cite{bollobas}, the asymptotic distribution of $\Q_H(\cG(n, p_n))$ is therefore Poisson. This fact is also used in the proof of the lemma below, which asserts a lower bound on the probability of having disjoint copies of $H$.

\begin{lem}[Disjoint occurrence]
    Let $D_s$ denote the event that there are $s$ disjoint copies of $H$ in $G \sim \cG(n, p = p_n)$. Fix $\eps > 0$. If $s \leq n^{1 - \eps}$,
    \begin{align*}
        \P(D_s) \geq C\Exp{-C's \log s}
    \end{align*}
    for all sufficiently large $n$ depending on $\eps$. Here $C, C'$ are constants depending on $H$.
\end{lem}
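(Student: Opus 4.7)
The plan is to lower bound $\P(D_s)$ by building $s$ disjoint copies of $H$ one at a time inside pre-chosen disjoint blocks, and exploiting the resulting independence. Concretely, partition $V(G)$ into $s$ disjoint blocks $V_1, \ldots, V_s$ of size $m := \lfloor n/s \rfloor$; the hypothesis $s \leq n^{1-\eps}$ ensures $m \geq n^\eps/2 \gg q$ for all large enough $n$. Let $X_i$ be the number of copies of $H$ spanned by $V_i$. The $X_i$ are i.i.d.\ random variables (each depends only on the edges inside $V_i$), so
\begin{align*}
\P(D_s) \geq \prod_{i=1}^s \P(X_i \geq 1) = \P(X_1 \geq 1)^s,
\end{align*}
and it suffices to prove $\P(X_1 \geq 1) \geq C s^{-q}$. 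A direct count gives $\lambda := \E X_1 \asymp m^q p_n^{e(H)} = s^{-q}$, using $e(H) = q\D/2$.

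The core step is a Paley--Zygmund type second moment estimate to conclude $\P(X_1 \geq 1) \geq C\lambda$. Writing $X_1 = \sum_\alpha I_\alpha$ over potential copies of $H$,
\begin{align*}
\E X_1^2 = \E X_1 + \sum_{\alpha \neq \beta} \E[I_\alpha I_\beta].
\end{align*}
The cross sum splits into edge-disjoint pairs, which are independent and contribute at most $(\E X_1)^2 = \lambda^2$, and pairs sharing at least one edge. A dependent pair whose shared subgraph has $j \geq 2$ vertices and $k \geq 1$ edges contributes, up to $H$-dependent constants,
\begin{align*}
m^{2q-j} p_n^{2e(H)-k} = s^{-(2q-j)} n^{-(j - 2k/\D)} \leq \lambda \cdot n^{-(j - 2k/\D)},
\end{align*}
where the final inequality uses $s^{-(2q-j)} \leq s^{-q} = \lambda$ since $j \leq q$. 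Because $H$ is $\D$-regular and hence strictly balanced, every proper subgraph satisfies $k/j < \D/2$, so $j - 2k/\D > 0$; summing over the finitely many intersection types therefore gives $o(\lambda)$. Consequently $\E X_1^2 = \lambda + \lambda^2 + o(\lambda) = O(\lambda)$, and Paley--Zygmund yields $\P(X_1 \geq 1) \geq \lambda^2/\E X_1^2 \geq C\lambda \geq C' s^{-q}$.

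Multiplying over the $s$ blocks yields $\P(D_s) \geq (C's^{-q})^s = \exp(s\log C' - qs \log s) \geq C\exp(-C' s \log s)$, where the last inequality absorbs the additive $s \log C'$ into the main term using $s \geq 2$. The main obstacle is the dependent-pair bound in the second moment computation, which is where strict balancedness (itself a consequence of $\D$-regularity, as noted above) plays its essential role. The hypothesis $s \leq n^{1-\eps}$ is used only to guarantee $m \gg q$, so that $\binom{m}{q} \asymp m^q$ and $\lambda$ genuinely scales like $s^{-q}$; beyond this, no further property of $\eps$ is needed.
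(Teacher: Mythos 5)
Your argument is correct, but it takes a genuinely different route from the paper's. After the common first step (partition $[n]$ into $s$ blocks of size $m = \lfloor n/s\rfloor$ and use independence across blocks), the paper lower bounds $\P(X_1 \geq 1)$ via a two-stage sampling trick: it samples $\cG(m, m^{-2/\D})$, invokes Bollobás's Poisson limit theorem to get $\P(\text{some copy of }H) \geq c$, and then thins each edge independently with probability $(2s)^{-2/\D}$, paying a surviving-copy cost of $(2s)^{-q}$. You instead prove the bound $\P(X_1 \geq 1) \geq C s^{-q}$ directly by a second-moment/Paley--Zygmund calculation, with the edge-disjoint pairs contributing at most $\lambda^2$ and the overlapping pairs contributing $O(\lambda n^{-\delta})$ for some $\delta = \delta(H) > 0$ by strict balancedness of $H$. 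Both arguments are valid and lean on the same underlying fact (strict balancedness of regular $H$), but your version is self-contained --- it in effect re-proves the relevant threshold appearance probability rather than citing the Poisson limit theorem as a black box --- and it makes the role of strict balancedness explicit at the exact point it matters. One small thing worth stating explicitly if you write this up: for $s$ of constant size $\lambda = \Theta(s^{-q})$ is $\Theta(1)$ rather than small, so the Paley--Zygmund denominator is $\lambda + \lambda^2 + o(\lambda) \leq C\lambda$ with $C$ an $H$-dependent constant; this is harmless but deserves a sentence since the usual slogan ``$\E X^2 \sim \E X$'' presumes $\lambda \to 0$.
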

    The matching upper bound is stated in \rlem{lem:disjointprob}.
\begin{proof}
    Divide the vertex set $[n]$ of $K_n$ into $s$ (almost) equal sized groups of size either $\floor{n/s}$ or $\ceil{n/s}$. Set $m \df \floor{n/s}$. Then,
    \begin{align}
        \label{lb:initial}
        \P(s \text{ disjoint copies of } H) \geq \P(\cG(m, p_n) \text{ has a copy of } H)^s
    \end{align}
    To lower bound the probability of the latter, let $q = m^{-2/\D}$, and observe that $p_n = n^{-2/\D} \geq (2ms)^{-2/\D}$, as long as 
    \begin{align*}
        2ms \geq n \iff 2\floor{n/s} \geq n/s,
    \end{align*}
    which happens for all sufficiently large $n$ depending on $s$ (and uniformly in $s \leq n^{1 - \eps}$ depending on $\eps$). We assume this holds below. Then
    \begin{align}
        \label{lb:other}
        \P(\cG(m, p_n) \text{ has a copy of } H) \geq \P(\cG(m, (2ms)^{-2/\D}) \text{ has a copy of } H).
    \end{align}
    We now employ a two stage sampling procedure as follows: to sample a $\cG(m, (2ms)^{-2/\D})$, we first sample a $\cG(m, m^{-2/\D})$, and then keep each of its edges independently with probability $(2s)^{-2/\D}$. It is clear that the resulting graph is a sample from $\cG(m, (2ms)^{-2/\D})$. But crucially, since the expected number of copies of $H$ in $\cG(m, m^{-2/\D})$ is $\Theta(1)$, we can employ the classical fact that the distribution of subgraph counts at the threshold is Poisson for strictly balanced $H$ (see, for instance, \cite{bollobas} or \cite[Theorem 3.19]{janson2011random:book}), to conclude that 
    \begin{align*}
        \P(\cG(m, m^{-2/\D}) \text{ has a copy of } H) \geq c,
    \end{align*}
    for a constant $c$ depending on $H$ for all sufficiently large $m$. A given copy of $H$ in $\cG(m, m^{-2/\D})$ survives the second round of the procedure only if all its edges survive, which happens with probability 
    \begin{align*}
        \Rnd{(2s)^{-2/\D}}^{e(H)} = (2s)^{-\f{2}{\D} \f{q\D}{2}} = (2s)^{-q}.
    \end{align*}
    Putting these together, 
    \begin{align*}
        \P(\cG(m, (2ms)^{-2/\D}) \text{ has a copy of } H) \geq c(2s)^{-q},
    \end{align*}
    so that we can invoke \eqref{lb:initial} and \eqref{lb:other} to obtain
    \begin{align*}
        \P(s \text{ disjoint copies of } H) \geq \Rnd{c(2s)^{-q}}^s \geq C\Exp{-C' s \log s},
    \end{align*}
    for sufficiently large $n$ depending on $\eps$.
\end{proof}

The next lemma is a lower bound on clique occurrence probabilities.

\begin{lem}[Clique]
    The probability that there is a clique of size $s \geq 2$ in $G \sim \cG(n, p = p_n)$ is at least $\Exp{-c s^2 \log n}$ where $c$ depends only on $\D$.
\end{lem}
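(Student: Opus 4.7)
The plan is to note that the event ``there is a clique of size $s$'' is implied by the much more specific event ``a designated set of $s$ vertices forms a clique''. So it suffices to lower bound the probability of this latter event by $\exp(-cs^2 \log n)$.

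Concretely, fix any subset $S \subseteq [n]$ with $|S| = s$ (which is possible since $s \geq 2$ and $n$ will be taken large). The event that $S$ induces a clique in $G \sim \cG(n, p_n)$ has probability exactly
\begin{align*}
    p_n^{\binom{s}{2}} = n^{-\frac{2}{\D}\binom{s}{2}} = n^{-\frac{s(s-1)}{\D}} = \Exp{-\frac{s(s-1)}{\D}\log n}.
\end{align*}
Since this is a lower bound on $\P(\text{some clique of size } s \text{ exists})$, and since $s(s-1) \leq s^2$, we obtain
\begin{align*}
    \P(\exists \text{ clique of size } s) \geq \Exp{-\frac{s^2}{\D}\log n},
\end{align*}
so we may take $c = 1/\D$.

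There is no real obstacle here; the only subtlety is recognizing that for the purposes of a lower bound on the occurrence of \emph{some} clique, one need not apply any inclusion–exclusion or second moment argument — the single-set bound is already tight up to constants in the exponent, and indeed matches the upper bound from a union bound over $\binom{n}{s} \leq n^s = \exp(s \log n)$ choices, which contributes only a lower-order $s \log n$ to the exponent relative to the dominant $s^2 \log n$.
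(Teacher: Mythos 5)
Your proof is correct and matches the paper's argument exactly: both fix a single set of $s$ vertices, compute the probability that all $\binom{s}{2}$ edges among them are present as $p_n^{\binom{s}{2}}$, and bound this below by $\exp(-cs^2\log n)$ using $p_n = n^{-2/\D}$. The closing remark about the union bound matching the exponent is not needed for the lower bound but is a reasonable sanity check.
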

\begin{proof}
    The probability of having a clique of size $s$ is at least that of the event that any fixed set of  $s$ vertices have all edges in between themselves. The probability of this latter event is exactly:
    \begin{align*}
        p_n^{\binom{s}{2}} \geq \Exp{-\f{1}{2}(s - 1)^2 \log (1/p_n)} = \Exp{-c s^2 \log n},
    \end{align*}
    where $c = c(\D)$. This proves the result.
\end{proof}

These two results together imply
\begin{lem}[Lower bound of Theorem \ref{thm:main}]
    In the notation above, 
    \begin{align*}
        \P(\Q_H(G) \geq k_n) \geq C\Exp{-C'L_n} = C\Exp{-C'\min(k_n \log k_n, k_n^{2/q} \log n)}
    \end{align*}
    for constants $C, C'$ depending on $H$.
\end{lem}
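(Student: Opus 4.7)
The plan is to combine the two preceding lemmas in complementary regimes of $k_n$, each corresponding to one branch of the minimum defining $L_n$. The first step is to split into two cases based on which of $k_n \log k_n$ or $k_n^{2/q} \log n$ realizes the minimum.

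In the first case, $k_n \log k_n \leq k_n^{2/q} \log n$, so $L_n = k_n \log k_n$. Rearranging gives $k_n^{1 - 2/q} \log k_n \leq \log n$, and since $q \geq 3$ we have $1 - 2/q \geq 1/3$, forcing $k_n \leq (\log n)^{q/(q-2)}$. This is comfortably at most $n^{1 - \eps}$ for, say, $\eps = 1/2$ and all sufficiently large $n$, so the disjoint occurrence lemma applies with $s = k_n$, yielding
\begin{align*}
\P(\Q_H(G) \geq k_n) \geq \P(D_{k_n}) \geq C \Exp{-C' k_n \log k_n} = C \Exp{-C' L_n}.
\end{align*}

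In the second case, $k_n^{2/q} \log n \leq k_n \log k_n$, so $L_n = k_n^{2/q} \log n$. Choose $s = \lceil c_H k_n^{1/q} \rceil$ for a constant $c_H$ depending on $H$, large enough that a clique on $s$ vertices contains at least $k_n$ copies of $H$; such a $c_H$ exists because for $s \geq 2q$, $K_s$ contains at least $(s/2)^q / \aut(H)$ copies of $H$ (from the falling factorial $s(s-1)\cdots(s-q+1)$ divided by $\aut(H)$). The clique lemma then gives
\begin{align*}
\P(\Q_H(G) \geq k_n) \geq \Exp{-c s^2 \log n} \geq \Exp{-c' k_n^{2/q} \log n} = C \Exp{-C' L_n}.
\end{align*}
For bounded $k_n$ the required bound reduces to a positive constant lower bound on $\P(\Q_H(G) \geq k_n)$, which follows from either lemma.

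The only genuine subtlety is verifying that the smallness constraint $s \leq n^{1 - \eps}$ from the disjoint occurrence lemma is automatically satisfied in the first case. This is precisely where $q \geq 3$ matters: the critical boundary $k_n^{1 - 2/q} \log k_n \approx \log n$ forces $k_n$ to be only polylogarithmic in $n$, well inside the allowed range. Beyond this the argument is essentially bookkeeping, since the two preceding lemmas supply the required estimates directly.
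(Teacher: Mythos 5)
Your proof is correct and takes essentially the same approach as the paper: combine the two preceding lemmas (disjoint occurrence and clique), using the one corresponding to the branch of the minimum that realizes $L_n$, and verify the constraint $s \leq n^{1-\eps}$ is automatic in the disjoint-copies regime. You are in fact somewhat more careful than the paper's one-line proof, which also contains a small typo (it writes the optimal clique size as $Ck_n^{2/q}$ where the counting argument you supply shows it should be $Ck_n^{1/q}$, consistent with the exponent $k_n^{2/q}\log n = s^2\log n$ in the clique lemma).
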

\begin{proof}
    Given the lemmas above, the only remaining part is that the smallest clique containing at least $k_n$ copies of $H$ is of size $Ck_n^{2/q}$, but this is immediate from a counting argument.
\end{proof}
%}}}1

\section{Finner's inequality}
%{{{1
{
\label{sec:holder}
As described earlier, one of the most important results we require throughout the sequel is the following variant of H\"older's inequality. As far as we could tell, applications of this inequality in the context of large deviations on random graphs first appeared in \cite{lubetzky2015replica}, where they used it to analyze the variational problem associated with the upper tail deviations for $\cG(n, p)$ with fixed $p$.
    \renewcommand{\Om}{\Omega}
    \begin{lem}[Finner's inequality, \cite{finner}]
        \label{lem:finner}
        Let $\mu_1, \mu_2, \ldots, \mu_n$ be probability measures on $\Om_1, \Om_2, \ldots, \Om_n$ respectively and define $\Om = \prod_{i = 1}^n \Om_i$ and $\mu = \prod_{i = 1}^n \mu_i$. Also let $A_1, A_2, \ldots, A_m$ be nonempty subsets of $[n] = \{1, \ldots, n\}$, and for any subset $A \subseteq [n]$, denote $\mu_A = \prod_{i \in A} \mu_i$ and $\Om_A = \prod_{i \in A} \Om_i$. Suppose $f_i \in L^{p_i}(\Om_{A_i}, \mu_{A_i})$ for each $i \in [m]$, such that for all $x \in [n]$, 
        \begin{align*}
            \sum_{i : x \in A_i} p_i^{-1} \leq 1.
        \end{align*}
        Then we have the inequality
        \begin{align*}
            \int \prod_{i = 1}^m |f_i| d\mu \leq \prod_{i = 1}^m \Rnd{\int |f_i|^{p_i} d\mu_{A_i}}^{1/p_i}.
        \end{align*}
        Here, on the left-hand side, one should think of each $f_i$ extended to a function on $\Om$ which depends only on the coordinates in $A_i$.
    \end{lem}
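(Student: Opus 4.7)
The plan is to induct on $n$, the number of coordinate spaces, with classical generalized H\"older handling each one-dimensional slice and Fubini gluing the slices together.

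For the base case $n=1$, each $A_i$ is either $\nul$ or $\{1\}$, and the hypothesis reduces to $\sum_{i : A_i = \{1\}} 1/p_i \leq 1$. This is the standard generalized H\"older inequality on a single probability space $(\Om_1, \mu_1)$: the equality case $\sum 1/p_i = 1$ follows by iterated application of the two-function H\"older, and the slack case $\sum 1/p_i < 1$ is then obtained from the monotonicity $\|f\|_q \leq \|f\|_p$ for $q \leq p$ on a probability space (e.g., apply equality H\"older with rescaled exponents $q_i = p_i \cdot \sum_j 1/p_j$).

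For the inductive step, suppose the result holds for $n-1$ and split $[m] = S \sqcup T$ with $S = \{i : n \in A_i\}$. For $i \in T$ the function $f_i$ is independent of the $n$-th coordinate, so Fubini gives
\eq{
    \int \prod_{i=1}^m |f_i| \, d\mu = \int_{\Om_{[n-1]}} \Rnd{\prod_{i \in T} |f_i|} \Rnd{\int_{\Om_n} \prod_{i \in S} |f_i| \, d\mu_n} d\mu_{[n-1]}.
}
The hypothesis gives $\sum_{i \in S} 1/p_i \leq 1$, so the base case applied to the $x_n$-slice with the other coordinates frozen bounds the inner integral pointwise by $\prod_{i \in S} g_i(x_{A_i \setminus \{n\}})$, where $g_i \df \Rnd{\int |f_i|^{p_i} d\mu_n}^{1/p_i}$.

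Substituting this bound and applying the inductive hypothesis on $[n-1]$ to the family $\{g_i\}_{i \in S} \cup \{f_i\}_{i \in T}$ with subsets $\{A_i \setminus \{n\}\}_{i \in S} \cup \{A_i\}_{i \in T}$ and exponents $p_i$ yields the desired inequality. The required constraint is preserved for each $x \in [n-1]$ since removing $n$ from $A_i$ does not affect membership of any $x \neq n$. A final use of Fubini identifies $\int g_i^{p_i} \, d\mu_{A_i \setminus \{n\}}$ with $\int |f_i|^{p_i} \, d\mu_{A_i}$, completing the induction. I do not anticipate a serious obstacle; the only bookkeeping involves the cases $A_i = \nul$ (in which $f_i$ is constant and contributes nothing to any constraint) and $S$ or $T$ empty (both trivial). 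The entire argument is in essence a careful tensorization of H\"older's inequality.
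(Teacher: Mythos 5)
The paper does not prove this lemma; it is stated with a citation to Finner's original paper, so there is no in-paper argument to compare against. Your induction-on-the-number-of-coordinate-spaces argument is correct: the base case is the generalized H\"older inequality on a single probability space (with the slack case $\sum 1/p_i < 1$ reduced to the tight case via $L^q \hookleftarrow L^p$ monotonicity), the inductive step integrates out the last coordinate by Fubini, applies the one-dimensional inequality pointwise to the factors that see that coordinate, and then invokes the inductive hypothesis for the reduced family, with a final Fubini reassembling the $L^{p_i}$ norms. This is essentially the classical tensorization proof, and it is (up to presentation) the route Finner himself takes. The only small caveat, which you already flag, is that the induction naturally passes through the slightly relaxed statement allowing $A_i = \varnothing$; that generalization is trivial to accommodate and closes the argument cleanly.
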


    Our primary application of this will be to count homomorphisms from $H$ to $G$, using the \emph{graphon representation} of $G$. 

    \begin{defn}[Graphon representation of a finite graph $G$]
        Let $G$ be a finite graph on $n$ vertices, and without loss of generality, suppose its vertices are $v_0, \ldots, v_{n - 1}$. The graphon (representation) of $G$ is defined as a function $f_G : [0, 1]^2 \to \{0, 1\}$ as follows. 
        \begin{align*}
            f_G(x, y) = 1 \iff \exists\ 0 \leq i, j < n : \f{i}{n} \leq x < \f{i + 1}{n}, \f{j}{n} \leq y < \f{j + 1}{n}, (v_i, v_j) \in E(G)
        \end{align*}
        else it is zero.
    \end{defn}
    It is a classical result following from direct computation that the number of homomorphisms from $H$ to $G$ is given by (see, for instance, \cite[Chapter 7]{lovasz:book}, or \cite[Chapter 3]{chatterjee:book}),
    \begin{align*}
        n^q \int_{[0, 1]^q} \prod_{(u, v) \in E(H)} f(t_u, t_v) \prod_{u \in V(H)} dt_u.
    \end{align*}
    This would enable us to bound the number of homomorphisms from $H$ to $G$, and therefore the number of copies of $H$ in $G$.
    
    Later, we will need a slightly more general result relaxing the regularity requirement for $H$. While it may also be deduced from \cite[Corollary 3.2]{lubetzky2015replica}, we provide a proof for completeness.
    \begin{lem}
        \label{lem:holder}
        Let $H$ be $q$-vertex $e$-edge (not necessarily connected) graph with each degree bounded above by $\D$. Let $G$ be a graph with graphon $f_G$. Write $n = |V(G)|$, and $m = |E(G)|$. Then the number of homomorphisms of $H$ in $G$, $N_H(G)$ is at most
        \begin{align*}
            N_H(G) \leq C n^{q - 2e/\D} m^{e/\D}.
        \end{align*}
        In particular for a regular $H$, $2e/\D = q$, so we have $N_H(G) \leq C m^{q/2}$.
    \end{lem}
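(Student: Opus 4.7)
The plan is to use Finner's inequality (Lemma \ref{lem:finner}) directly on the graphon representation of $G$. The starting point is the identity
\begin{align*}
N_H(G) = n^q \int_{[0,1]^q} \prod_{(u,v) \in E(H)} f_G(t_u, t_v) \prod_{u \in V(H)} dt_u,
\end{align*}
where we regard $[0,1]^q$ as indexed by the vertex set $V(H)$, each $t_u$ living in $[0,1]$ with Lebesgue measure. The integrand is a product of $e$ factors, one per edge $(u,v) \in E(H)$, and the factor associated to the edge $(u,v)$ depends only on the coordinates in $A_{(u,v)} := \{u,v\}$. Finner's inequality will be applied with this collection of subsets.

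The key choice is to take exponents $p_{(u,v)} = \D$ for every edge. The Finner condition at a vertex $x \in V(H)$ reads
\begin{align*}
\sum_{(u,v) \in E(H) : x \in \{u,v\}} p_{(u,v)}^{-1} = \frac{d_x}{\D} \leq 1,
\end{align*}
which is exactly the hypothesis that every degree of $H$ is bounded by $\D$. Finner's inequality therefore yields
\begin{align*}
\int_{[0,1]^q} \prod_{(u,v) \in E(H)} f_G(t_u,t_v) \prod_u dt_u \leq \prod_{(u,v) \in E(H)} \Rnd{\int_{[0,1]^2} f_G(x,y)^{\D}\, dx\, dy}^{1/\D}.
\end{align*}

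Since $f_G$ is $\{0,1\}$-valued, $f_G^{\D} = f_G$, and a direct computation from the definition of the graphon gives $\int_{[0,1]^2} f_G(x,y)\, dx\, dy = 2m/n^2$ (the factor of $2$ accounting for both orientations of each edge of $G$). Substituting, each of the $e$ factors equals $(2m/n^2)^{1/\D}$, so that
\begin{align*}
N_H(G) \leq n^q \cdot \Rnd{\frac{2m}{n^2}}^{e/\D} = 2^{e/\D}\, n^{q - 2e/\D}\, m^{e/\D},
\end{align*}
and $2^{e/\D}$ is bounded by a constant $C$ depending only on $H$. In the regular case, $2e/\D = q$, and the bound simplifies to $N_H(G) \leq C m^{q/2}$, as claimed. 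There is no real obstacle in the argument — the only thing to notice is that the degree hypothesis of $H$ translates exactly into the Finner exponent condition, and that the integrability exponent $\D$ collapses the $\{0,1\}$-valued graphon back to its own integral, which is precisely the edge density of $G$.
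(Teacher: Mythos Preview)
Your proof is correct and follows essentially the same approach as the paper: apply Finner's inequality with sets $A_{(u,v)} = \{u,v\}$ and exponents $p_{(u,v)} = \D$, verify the exponent condition via the degree bound, and use $f_G^\D = f_G$ together with $\int f_G = 2m/n^2$ to conclude.
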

    \begin{proof}
        We apply Finner's inequality with $\Om_i = [0, 1]$ for all $i \in [n]$, and $\mu_i$ being the uniform measure on $[0, 1]$. For each edge $e = (u, v)$ of $H$ we choose $A_e = \{u, v\}$ and $p_e = \Delta$. Then the conditions are satisfied, and we get
        \begin{align*}
            \int_{[0, 1]^q} \prod_{(u, v) \in E(H)} f_G(t_u, t_v) \prod_{u \in V(H)} dt_u &\leq \prod_{(u, v) \in E(H)} \Rnd{\int_{[0, 1]^2} f_G(t_u, t_v)^\D dt_u dt_v}^{1/\D} \\
                          & = \Frac{2m}{n^2}^{e/\D} \\
                                                                                                                                                                               & \leq C m^{e/\D} n^{-2e/\D},
        \end{align*}
        where we use the fact that $f_G$ is zero-one valued, so, $f_G^\D = f_G$. This finishes the proof.
    \end{proof}

    The graphon representation and the formula for homomorphism density is also useful in bounding the \emph{expected number of copies} of a particular graph $H$ in a random graph $G$ with independent edges. For a given collection of probabilities $0 \leq p_{uv} \leq 1$ for $u < v \in [n]$, consider the random graph model on the vertex set $[n]$ where the edge $(u, v)$ is present with probability $p_{uv}$, independently of other edges (undirected edges). For such a model we define:

    \begin{defn}[Graphon representation of a finite random graph $G$ with independent edges]
        For the model described above, the associated graphon is defined as a function $f_G : [0, 1]^2 \to [0, 1]$ satisfying
        \begin{align*}
            f_G(x, y) = p_{ij} \iff \exists\ 0 \leq i, j < n : \f{i}{n} \leq x < \f{i + 1}{n}, \f{j}{n} \leq y < \f{j + 1}{n}, (i, j) \in E(G)
        \end{align*}
        else it is zero.
    \end{defn}

    One of the key ingredients in our argument is the following novel result about the number of copies of $H$ in $G$ containing a given edge $e \in G$, which, as alluded to in section \ref{idea}, will be crucial in our analysis of the core, see \rlem{lem:product}. We are not aware of a similar previous result.
    \begin{lem}
        \label{lem:fixedholder}
        Let $\G$ be a subgraph of $K_n$ (with vertex set $[n]$), and let $f$ be the graphon associated with the random graph model $G$ on $[n]$ where the edges in $\G$ are present with probability 1, and every other edge is present with probability $p$ independently. Suppose $\G$ has a distinguished edge $(a, b)$, and let $H$ be a $\D$-regular graph with $q$ vertices. Then, the expected number of copies of $H$ in $G$ containing $(a, b)$ is at most:
        \begin{align*}
            \E \Q_H(G) \leq C(H) \Rnd{d_a + n p^\D}^{\f{\D - 1}{\D}}\Rnd{d_b + n p^\D}^{\f{\D - 1}{\D}}\Rnd{e + n^2 p^\D}^{q/2 - 2 + 1/\D}
        \end{align*}
        where $e = e(\G)$ and $d_a, d_b$ are the degrees of $a, b$ respectively in $\G$.

        \fix{In addition, if $\Q_H'(G)$ is the number of expected number of copies of $H$ in $G$ containing $(a, b)$ where some edge must come from outside $\G$, then
        \begin{align*}
        \E \Q'_H(G) & \leq C(H) \max(B_1, B_2, B_3)
        \intertext{where}
        B_1 & \df \Rnd{d_a + n p^\D}^{\f{\D - 2}{\D}}\Rnd{d_b + n p^\D}^{\f{\D - 1}{\D}}\Rnd{e + n^2 p^\D}^{q/2 - 2 + 1/\D} pn^{1/\D},\\
        B_2 & \df \Rnd{d_a + n p^\D}^{\f{\D - 1}{\D}}\Rnd{d_b + n p^\D}^{\f{\D - 2}{\D}}\Rnd{e + n^2 p^\D}^{q/2 - 2 + 1/\D} pn^{1/\D},\\
        B_3 & \df \Rnd{d_a + n p^\D}^{\f{\D - 1}{\D}}\Rnd{d_b + n p^\D}^{\f{\D - 1}{\D}}\Rnd{e + n^2 p^\D}^{q/2 - 2} pn^{2/\D}.
    \end{align*}}
    \end{lem}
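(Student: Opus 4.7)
The plan is to set up the calculation via the graphon framework of \rlem{lem:holder}, localized at the edge $(a, b)$. Summing over which edge $(u^*, v^*) \in E(H)$ is forced to map onto $(a, b)$ (in both orientations), at cost of a factor $O_H(1)$ I reduce to bounding the expected count of homomorphisms $\phi : V(H) \to V(G)$ with $\phi(u^*) = a$, $\phi(v^*) = b$. Via the graphon formula this becomes $n^{q - 2}$ times an integral over the $q - 2$ unfixed coordinates of $\prod_{(u,w) \in E(H) \setminus \{(u^*, v^*)\}} f(t_u, t_w)$ (the distinguished edge contributes the factor $1$ since $(a, b) \in E(\G)$). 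Since $f \leq f_{\G} + p$, I expand the product over the $|E(H)| - 1$ non-distinguished edges into $2^{|E(H)| - 1}$ terms indexed by subsets $S \subseteq E(H) \setminus \{(u^*, v^*)\}$, where the edges in $S$ contribute $f_{\G}$ and the rest contribute $p$.

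For each $S$ I then apply Finner's inequality (\rlem{lem:finner}) with exponent $\Delta$ assigned to every edge; the constraint is met because each vertex of $H$ has degree at most $\Delta$. Classifying edges of $S$ by whether they touch $u^*$ (count $s_a \leq \Delta - 1$), touch $v^*$ (count $s_b \leq \Delta - 1$), or touch neither (count $s_0 \leq |E(H)| - 2\Delta + 1$), Finner bounds the respective edge-contributions by $(d_a/n)^{1/\Delta}$, $(d_b/n)^{1/\Delta}$, and $(2e/n^2)^{1/\Delta}$, using $\int_0^1 f_\G(a, t)\,dt = d_a/n$ and $\iint f_\G = 2e/n^2$. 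Grouping the sum over $S$ by $(s_a, s_b, s_0)$ and absorbing the resulting binomial coefficients into $C(H)$, the triple sum factorizes into three independent sums.

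Each such factor sum collapses by the elementary inequality $x^\alpha y^\beta \leq (x + y)^{\alpha + \beta}$ for $\alpha, \beta \geq 0$, e.g.
\eq{
p^{\Delta - 1 - s_a}(d_a/n)^{s_a/\Delta} = (p^\Delta)^{(\Delta - 1 - s_a)/\Delta}(d_a/n)^{s_a/\Delta} \leq (p^\Delta + d_a/n)^{(\Delta - 1)/\Delta},
}
giving $\Delta \cdot (p^\Delta + d_a/n)^{(\Delta - 1)/\Delta}$ for the $s_a$-sum, and analogously for the other two. A short check shows that the $n$-denominators pulled out --- $n^{(\Delta-1)/\Delta}, n^{(\Delta-1)/\Delta}, n^{q - 4 + 2/\Delta}$ --- cancel the prefactor $n^{q-2}$ exactly, yielding the first claimed bound. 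For $\E \Q'_H(G)$, the ``at least one edge outside $\G$'' condition corresponds to $S \subsetneq E(H) \setminus \{(u^*, v^*)\}$, which I split via a union bound into the three (overlapping) cases $s_a \leq \Delta - 2$, $s_b \leq \Delta - 2$, and $s_0 \leq |E(H)| - 2\Delta$; in each case the affected factor sum loses one slot in its exponent but picks up an extra $p$ (e.g.\ $p^{\Delta - 1 - s_a}(d_a/n)^{s_a/\Delta} \leq p \cdot (p^\Delta + d_a/n)^{(\Delta-2)/\Delta}$ for $s_a \leq \Delta - 2$), producing $B_1, B_2, B_3$ respectively after rechecking the cancellation of $n$-powers.

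The main obstacle is the careful bookkeeping: tracking how $(s_a, s_b, s_0)$ parameterize the expansion, ensuring Finner is applied with the correct distribution of integration variables across edges, and verifying that the $n$-powers collapse exactly in each of the four cases. A small slip in how the $p$-factors or binomial coefficients are apportioned across the three factor sums would visibly shift the exponents in the final bound away from the stated form, so the calculation needs to be done uniformly and consistently across all three types of edges.
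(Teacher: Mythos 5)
Your proof is correct, and it takes a genuinely different (though parallel) route from the paper. The paper applies Finner's inequality \emph{once}, directly to the mixed graphon $f$ taking values in $\{0, p, 1\}$, and then simply evaluates $\int f^\D = \int [f_\G + p^\D(1 - f_\G)]$ over the relevant domains, yielding $d_a/n + p^\D$, $d_b/n + p^\D$, or $2e/n^2 + p^\D$ in one step. You instead first decompose $f \leq f_\G + p$, expand the product over the $|E(H)| - 1$ non-distinguished edges into $2^{|E(H)|-1}$ terms indexed by $S$, apply Finner to each term's $f_\G$-product (a $0$--$1$ graphon), and then \emph{recombine} via $x^\alpha y^\beta \leq (x+y)^{\alpha+\beta}$. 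The recombination effectively undoes the decomposition, so the paper's single Finner application is leaner; but your version is perfectly sound, and it has the mild advantage of only ever applying Finner to indicator-valued graphons. The two approaches also diverge on the second claim: the paper inserts an explicit indicator $\1_{f(t_\al, t_\be) = p}$ for the edge forced outside $\G$, carries it through Finner, and bounds $\bigl(\int f^\D \1_{f = p}\bigr)^{1/\D} \leq p$; you identify the ``at least one edge outside $\G$'' event with proper subsets $S \subsetneq E(H)\setminus\{(u^*,v^*)\}$ in the expansion, noting that this pushes at least one of $s_a, s_b, s_0$ below its maximum, and pick up the $p$ via the lost slot in the corresponding factor. (For the record, the identification is legitimate because $\prod_e f - \prod_e f_\G \leq \prod_e(f_\G + p) - \prod_e f_\G = \sum_{S \subsetneq} (\cdots)$, using $f f_\G = f_\G$.) Both routes deliver the stated bounds, and the $n$-power bookkeeping checks out in each of your three cases.
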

    The terms $B_1, B_2, B_3$ correspond to the cases when the edge coming from outside $\G$ is adjacent to $a$, $b$ or none, respectively.
    \begin{proof}
        We begin by proving the first claim. Observe that
        \begin{align*}
            \Q_H(G) \leq \sum_{u \sim v} N(u, v),
        \end{align*}
        where 
        \begin{align}
            \label{eq:nuv}
            N(u, v) \df \sum_{\substack{\phi : V(H) \to [n] \\ \phi(u) = a, \phi(v) = b, \\ \phi \text{ injective}}} 
        \prod_{x \sim u} \1_{(\phi(u), \phi(x)) \in E(G)}\prod_{y \sim v}\1_{(\phi(v), \phi(y)) \in E(G)}\prod_{\substack{w \sim z \\ w, z \neq u, v}}\1_{(\phi(w), \phi(z)) \in E(G)}.
        \end{align}
        $N(u, v)$ counts all (injective) mappings $\phi : H \to [n]$ preserving edges in $H$, such that the edge $(u, v)$ maps to $(a, b)$.
    Here by $x \sim y$ we mean $(x, y) \in E(H)$ (for brevity, in this proof we also adopt the convention that for $u$, if we write $x \sim u$, we assume that $x \neq v$, and similarly for $v$). Therefore, it is sufficient to fix $u \sim v$ and prove the result, because summing over all distinct $u \sim v$ will only change the constant $C(H)$. For fixed $u \sim v$ we have
        \begin{align*}
            \E N(u, v) &\leq \sum_\phi \E[\prod_{x \sim u} \1_{(\phi(u), \phi(x)) \in E(G)}\prod_{y \sim v}\1_{(\phi(v), \phi(y)) \in E(G)}\prod_{\substack{w \sim z \\ w, z \neq u, v}}\1_{(\phi(w), \phi(z)) \in E(G)}],
            \intertext{where $\phi$ is understood to satisfy the conditions in  \eqref{eq:nuv}. Since $\phi$ is injective, all the edge random variables in the expectation above are independent, so we can write:}
                       &= \sum_\phi \prod_{x \sim u}\E[\1_{(\phi(u), \phi(x)) \in E(G)}]\prod_{y \sim v}\E[\1_{(\phi(v), \phi(y)) \in E(G)}]\prod_{\substack{w \sim z \\ w, z \neq u, v}}\E[\1_{(\phi(w), \phi(z)) \in E(G)}].
                       \intertext{For any vertex $\al \in [n]$, define the block of $\al$ as $B_\al \df [\al/n, (\al + 1)/n)$. Then note that, for instance, $\E[\1_{(\phi(x), \phi(y)) \in E(G)}] = f(t_x, t_y)$ for any $t_x \in B_{\phi(x)}, t_y \in B_{\phi(y)}$. For any collection $\{t_x \in B_{\phi(x)} : x \in H\}$, we then have that:}
                       &= \sum_\phi \prod_{x \sim u} f(t_u, t_x) \prod_{y \sim v} f(t_v, t_y) \prod_{\substack{w \sim z \\ w, z \neq u, v}} f(t_w, t_z).
                       \intertext{Fix $t_u \in B_a, t_v \in B_b$. Since the summand is a constant over all choices of $(t_x)_{x \neq u, v}$ as long as $(t_x)_{x \neq u, v} \in \prod_{x \neq u, v} B_{\phi(x)}$, the above sum is the same as}
                       &= n^{q - 2} \sum_\phi \int_{\prod_{x \neq u, v} B_{\phi(x)}} \prod_{x \sim u} f(t_u, t_x) \prod_{y \sim v} f(t_v, t_y) \prod_{\substack{w \sim z \\ w, z \neq u, v}} f(t_w, t_z) \prod_{x \neq u, v} dt_x,
                       \intertext{where the factor of $n^{q - 2}$ is because the volume of each block $B_{\phi(x)}$ is $n^{-1}$. Finally observe that across various choices of $\phi$, the domains $\prod_{x \neq u, v} B_{\phi(x)}$ are disjoint, so that the above is}
                       &\leq n^{q - 2} \int_{[0, 1]^{q - 2}} \prod_{x \sim u} f(t_u, t_x) \prod_{y \sim v} f(t_v, t_y) \prod_{\substack{w \sim z \\ w, z \neq u, v}} f(t_w, t_z) \prod_{x \neq u, v} dt_x.
                       \intertext{At this point we apply Finner's inequality with the sets given by $\{x\}$ for all $x \sim u$, $\{y\}$ for all $y \sim v$, and $\{w, z\}$ for all $w \sim z$ with $w, z \neq u, v$. We use the weights $p_i \equiv \D$ for all these sets. It is easy to verify that the conditions in \rlem{lem:finner} hold, and therefore:}
                       &\leq n^{q - 2} \prod_{x \sim u} \Rnd{\int f(t_u, t_x)^\D dt_x}^{1/\D}\prod_{y \sim v} \Rnd{\int f(t_v, t_y)^\D dt_y}^{1/\D} \\
                       &\hskip80pt \prod_{\substack{w \sim z \\ w, z \neq u, v}} \Rnd{\int f(t_w, t_z)^\D dt_w dt_z}^{1/\D}.
                       \intertext{In these products, the values for $x, y, w, z$ do not matter because the integrals are always over $[0, 1]$ for each variable, so we can simplify to (we drop the domains $[0, 1]$ and $[0, 1]^2$ from the notation):}
                       &=  n^{q - 2}\Rnd{\int f(t_u, t_x)^\D dt_x}^{\f{\D - 1}{\D}} \Rnd{\int f(t_v, t_y)^\D dt_y}^{\f{\D - 1}{\D}} \Rnd{\int f(t_w, t_z)^\D dt_w dt_z}^{\f{q}{2} - 2 + \f{1}{\D}},
                       \intertext{where we use the fact that the number of edges with no endpoint among $u, v$ is $q\D/2 - (2\D - 1) = q\D/2 - 2\D + 1$. As the last step, we use the fact that $f(t_u, t_x)$ is $1$ when $t_x \in B_c$ with $c \sim a$ in $\G$ and otherwise it is $p$. Similar observations also hold for $f(t_v, t_y)$ and $f(t_w, t_z)$. Therefore the above is
                       }
                       &\leq n^{q - 2}\Rnd{\f{d_a}{n} + p^\D}^{\f{\D - 1}{\D}}\Rnd{\f{d_b}{n} + p^\D}^{\f{\D - 1}{\D}} \Rnd{\f{2e}{n^2} + p^\D}^{\f{q}{2} - 2 + \f{1}{\D}}, \\
                       \intertext{because $f(t_u, t_x) = 1$ if and only if $\phi(u) \sim \phi(x) = a$ in $\G$ (otherwise it is $p$). A similar argument holds for the other two integrals. The factor of $2$ (in $2e$) in the last integral is because each edge in $\G$ is counted twice. Simplifying the above,}
                       &= \underbrace{n^{q - 2} \cdot n^{-2\f{\D - 1}{\D} - 2\Rnd{\f{q}{2} - 2 + \f{1}{\D}}}}_{= 1} \Rnd{d_a + np^\D}^{\f{\D - 1}{\D}}\Rnd{d_b + np^\D}^\f{\D - 1}{\D}\Rnd{2e + n^2p^\D}^{\f{q}{2} - 2 + \f{1}{\D}},
       \end{align*}
       which completes the proof by absorbing the factor of $2$ in front of $e$ into the constant $C(H)$.

       We now modify this to prove the second claim. Similar to \eqref{eq:nuv}, define
       \begin{align*}
           N^{\al, \be}(u, v) & \df \sum_{\substack{\phi : V(H) \to [n] \\ \phi(u) = a, \phi(v) = b, \\ \phi \text{ injective}}} 
           \prod_{x \sim u} \1_{(\phi(u), \phi(x)) \in E(G)}\prod_{y \sim v}\1_{(\phi(v), \phi(y)) \in E(G)} \\ 
           & \hskip80pt \prod_{\substack{w \sim z \\ w, z \neq u, v}}\1_{(\phi(w), \phi(z)) \in E(G)} \cdot \1_{(\phi(\al), \phi(\be)) \notin E(\G)}
       \end{align*}
       for all $(\al, \be) \in E(H)$ such that $(\al, \be) \neq (u, v)$. This counts the same quantity as $N(u, v)$, except now, $(\al, \be)$ maps to an edge outside $\G$. Observe that
       \begin{align*}
           \Q'_H(G) \leq \sum_{(\al, \be) \in E(H)} \sum_{E(H) \ni (u, v) \neq (\al, \be)} N^{\al, \be}(u, v)
       \end{align*}
       so that it is again sufficient to prove the bound in question for $N^{\al, \be}(u, v)$ for fixed $\al, \be, u, v \in H$ instead of $\Q'_H(G)$. Using calculations similar to the above we then obtain that
       \begin{align*}
           \E N^{\al, \be}(u, v) \leq n^{q - 2}\int_{[0, 1]^{q - 2}} \prod_{x \sim u} f(t_u, t_x) \prod_{y \sim v} f(t_v, t_y) \prod_{\substack{w \sim z, \\ w, z \neq u, v}} f(t_w, t_z) \cdot \1_{f(t_\al, t_\be) = p} \prod_x dt_x,
       \end{align*}
       because $f(t_\al, t_\be) = p$ if and only if $(\phi(\al), \phi(\be)) \notin E(\G)$.
       As indicated in the remark after the statement, now there are several cases depending on if $\al = u$, $\al = v$ or $\al, \be \neq u, v$ (the cases for $\be$ being similar).

        $\bullet$ $\al = u, \be \neq v$: The integral is then bounded by Finner's inequality similar to the above to obtain

               \begin{align*}
                   \E N^{\al, \be}(u, v) & \leq n^{q - 2} \prod_{\be \neq x \sim u} \Rnd{\int f(t_u, t_x)^\D dt_x}^{1/\D} \prod_{y \sim v} \Rnd{\int f(t_v, t_y)^\D dt_y}^{1/\D} \\ 
                                         & \hskip80pt \prod_{\substack{w \sim z \\ w, z \neq u, v}} \Rnd{\int f(t_w, t_z)^\D dt_w dt_z}^{1/\D}
                                         \Rnd{\int f(t_u, t_\be)^\D \1_{f(t_u, t_\be) = p} dt_\be}^{1/\D}. \\
                                         \intertext{Observe that the last factor is $\leq p$, so the above is bounded by}
                                        &\leq n^{q - 2}\Rnd{\f{d_a}{n} + p^\D}^{\f{\D - 2}{\D}}\Rnd{\f{d_b}{n} + p^\D}^{\f{\D - 1}{\D}} \Rnd{\f{2e}{n^2} + p^\D}^{\f{q}{2} - 2 + \f{1}{\D}} \cdot p\\
                                        & = \Rnd{d_a + np^\D}^{\f{\D - 2}{\D}}\Rnd{d_b + np^\D}^{\f{\D - 1}{\D}}\Rnd{2e + n^2p^\D}^{\f{q}{2} - 2 + \f{1}{\D}}pn^{1/\D}.
               \end{align*}
        $\bullet$ $\al = v, \be \neq u$: A calculation similar to the previous case will yield:
                \begin{align*}
                    \E N^{\al, \be}(u, v) \leq \Rnd{d_a + np^\D}^{\f{\D - 1}{\D}}\Rnd{d_b + np^\D}^{\f{\D - 2}{\D}}\Rnd{2e + n^2p^\D}^{\f{q}{2} - 2 + \f{1}{\D}}pn^{1/\D}.
                \end{align*}
        $\bullet$ $\al, \be \neq u, v$: Then,
                \begin{align*}
                    \E N^{\al, \be}(u, v) & \leq n^{q - 2} \prod_{x \sim u} \Rnd{\int f(t_u, t_x)^\D dt_x}^{1/\D} \prod_{y \sim v} \Rnd{\int f(t_v, t_y)^\D dt_y}^{1/\D} \\
                                        &\hskip60pt \prod_{\substack{w \sim z \\ w, z \neq u, v \\ (w, z) \neq (\al, \be)}} \Rnd{\int f(t_w, t_z)^\D dt_w dt_z}^{1/\D} 
                                          \Rnd{\int f(t_\al, t_\be)^\D \1_{f(t_\al, t_\be) = p} dt_\al dt_\be}^{1/\D}. \\
                                         \intertext{Again the last factor is at most $p$, so that the above is bounded by}
                                         &\leq n^{q - 2}\Rnd{\f{d_a}{n} + p^\D}^{\f{\D - 1}{\D}}\Rnd{\f{d_b}{n} + p^\D}^{\f{\D - 1}{\D}} \Rnd{\f{2e}{n^2} + p^\D}^{\f{q}{2} - 2} \cdot p\\
                                        & = \Rnd{d_a + np^\D}^{\f{\D - 1}{\D}}\Rnd{d_b + np^\D}^{\f{\D - 1}{\D}}\Rnd{2e + n^2p^\D}^{\f{q}{2} - 2}pn^{2/\D}.
                \end{align*}
       Taking a maximum over these three cases and modifying $C = C(H)$, we obtain the desired result.
    \end{proof}
}
% redefine \Om
%}}}1
% \newpage
\section{Few copies}
%{{{1
\label{sec:few}
In this section, we will prove our main theorem for ``small'' values of $k_n$. Henceforth we will abbreviate $Q = \Q_H(G)$.

\begin{defn}[Spanned Graph]
    A graph $G$ is called an {\bf $H$-spanned graph} (or ``spanned graph'' for short) if $G$ is connected and each edge is contained in a copy of $H$ in $G$. We say it is spanned by $\ell$ copies of $H$ if there are $\ell$ copies of $H$ in $G$ such that each edge of $G$ is contained in one of these copies. We will often use $\ell_* = \ell_*(G)$ to indicate the smallest possible value of $\ell$ given $G$. 

    For brevity, we will often say that a graph is spanned by $\ell$ copies of $H$ if it is a spanned graph and it is spanned by $\ell$ copies of $H$.
\end{defn}

    This is the analog of ``triangle-induced subgraphs'' as defined in \cite{ganguly}. Also note that $\ell_* = 1$ if and only if $G = H$ up to isomorphism.

Next we define some useful events:
\begin{itemize}
    \item $F_\ell$: there is a spanned subgraph of $G$ which is spanned by $\ell$ copies of $H$.
    \item $D_s$: there are $s$ vertex-disjoint copies of $H$ in $G$ \emph{(for brevity we will write ``disjoint'' to mean vertex-disjoint)}.
    \item $E^s_{\ell_1, \ldots, \ell_m}$: there are (at least) $s + m$ disjoint spanned subgraphs of $G$, with $s$ of them containing exactly one copy of $H$, and the remaining $m$ of them each spanned by $\ell_i$ copies for $1 \leq i \leq m$. Since the order of the $\ell_i$ are immaterial, we will always assume that $2 \leq \ell_1 \leq \ell_2 \leq \ldots \leq \ell_m$. When $s = 0$, we drop $s$, and simply write $E_{\ell_1, \ldots, \ell_m}$. (Here $s$ stands for ``singleton'' copies, and $m$ stands for ``multiple'' copies)
\end{itemize}

\fixr{Define/Cite disjoint occurrence operator.}

Using $\circ$ for the ``disjoint occurrence'' operator on events (see \cite[Section 3]{bk} for the definition), it is easy to see that 
\begin{align*}
    E^s_{\ell_1, \ldots, \ell_m} = D_s \circ F_{\ell_1} \circ \cdots \circ F_{\ell_m},
\end{align*}
and so 
\begin{align}
    \label{eq:bk}
    \P(E^s_{\ell_1, \ldots, \ell_m}) \leq \P(D_s)\prod_{i = 1}^m \P(F_{\ell_i}),
\end{align}
due to the BK-inequality, see \cite{bk}. To see why such a bound is useful, observe that if a graph $G$ satisfies $\Q = \Q_H(G) \geq k_n$, one can drop edges in $G$ not contained in any copy of $H$, and decompose the remaining graph into connected components. Clearly each remaining component is $H$-spanned, and no copies of $H$ were deleted, so that there are numbers $s, \ell_1, \ldots, \ell_m$ such that $G \in E^s_{\ell_1, \ldots, \ell_m}$. The rest of this section will be devoted to efficiently union bounding over various choices of these numbers. But first we bound the probabilities of $D_s$ and $F_\ell$ individually.
\begin{lem}\label{lem:disjointprob}
    $\P(D_s) \leq C\Exp{- s \log s}$ for some $C = C(H)$.
\end{lem}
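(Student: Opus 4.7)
The plan is the standard first-moment argument: apply Markov's inequality to the count of $s$-element subsets of pairwise vertex-disjoint copies of $H$ inside $G$.

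\textbf{Step 1 (Markov).} Let $X_s$ denote the number of unordered $s$-element collections of pairwise vertex-disjoint copies of $H$ contained in $G$. Since $D_s = \{X_s \geq 1\}$, one has $\P(D_s) \leq \E X_s$.

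\textbf{Step 2 (counting in $K_n$).} Enumerate candidate $s$-collections by first choosing the $qs$ vertices in $\binom{n}{qs}$ ways, then partitioning them into $s$ unordered blocks of size $q$ in $(qs)!/\bigl((q!)^s s!\bigr)$ ways, and finally placing a copy of $H$ on each block in $q!/|\aut H|$ ways (the latter by orbit-stabilizer applied to the action of $S_q$ on the bijections $V(H)\to [q]$: two such bijections yield the same subgraph iff they differ by an element of $\aut H$). Multiplying and using $n!/(n-qs)! \leq n^{qs}$ bounds the total count of candidates by
\begin{align*}
    \#\{\text{candidate collections}\} \;\leq\; \frac{n^{qs}}{s!\,|\aut H|^s}.
\end{align*}

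\textbf{Step 3 (edge probabilities and Stirling).} A single candidate collection consists of $s\cdot e(H) = sq\Delta/2$ prescribed (and, by vertex-disjointness, distinct) edges, present in $G$ independently with total probability $p_n^{sq\Delta/2} = n^{-qs}$, using $p_n = n^{-2/\Delta}$. Linearity of expectation then gives $\E X_s \leq 1/(s!\,|\aut H|^s)$, and Stirling's bound $s! \geq (s/e)^s$ yields
\begin{align*}
    \P(D_s) \;\leq\; \Rnd{\f{e}{s\,|\aut H|}}^{s} \;=\; \Exp{-s \log s + O_H(s)}.
\end{align*}
The linear-in-$s$ correction is absorbed into the prefactor $C = C(H)$ by splitting into a threshold $s_0 = s_0(H)$: for $s \geq s_0$ the term $O_H(s)$ is dominated by $-s \log s$, while for $s < s_0$ the trivial bound $\P(D_s)\leq 1$ suffices since $\Exp{-s \log s}$ is bounded below on this finite range.

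There is no substantial obstacle: this is a textbook Markov-plus-Stirling argument. The points of care are the orbit-stabilizer count for copies of $H$ on a fixed $q$-set of vertices, and the clean cancellation $n^{qs} \cdot p_n^{sq\Delta/2} = 1$, which arises precisely because $p_n$ has been tuned so that $\E Q_H(G) = \Theta(1)$ at the outset.
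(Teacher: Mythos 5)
Your proof follows the same first-moment plus Stirling argument as the paper's; the enumeration is presented differently (you choose all $qs$ vertices at once, then partition into $q$-blocks and place a copy of $H$ on each via orbit-stabilizer, whereas the paper picks the $q$-sets one at a time and bounds the probability that a fixed $q$-set contains a copy of $H$ by $C(H)n^{-q}$), but both routes land on the identical estimate $\E X_s \leq 1/(s!\,|\aut H|^s)$, and the reduction to this via Markov and independence over disjoint edge sets is the same.

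One point of care in your Step 3. When $|\aut H| < e$ — which does occur for regular $H$, e.g., a rigid cubic graph such as the Frucht graph where $|\aut H| = 1$ — the correction $O_H(s) = s(1 - \log|\aut H|)$ is a \emph{positive} linear function of $s$, and $\exp(O_H(s))$ is unbounded; it cannot be absorbed into a multiplicative prefactor $C = C(H)$. The threshold argument you describe actually yields $\P(D_s) \leq C\exp(-c\,s\log s)$ for $H$-dependent constants $C$ and $c \in (0,1)$, i.e., the slack must be placed as a constant inside the exponent rather than in front. This is exactly the form used downstream in \rlem{lem:few}, so nothing is lost, but your stated conclusion (coefficient $1$ on $s\log s$) overclaims slightly. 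The paper's own write-up in effect has the same wrinkle — it silently collapses $C^s = (q!/|\aut H|)^s$ from the union bound to a single $C$, and the cancellation it invokes via $\log(q!) > 1$ should really read $\log|\aut H| > 1$, which fails for rigid $H$ — so you are in good company, but it is worth being precise that the constant belongs in the exponent.
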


\begin{proof}
    The number of ways we can pick $s$ disjoint subsets of $[n]$ is at most
    \begin{align*}
        \f{1}{s!}\binom{n}{q}\binom{n - q}{q} \cdots \binom{n - (s - 1)q}{q} = \f{n!}{s! (q!)^s (n - qs)!} \leq \f{n^{qs}}{s!(q!)^s}.
    \end{align*}
    Given a fixed set of vertices $S$ of size $q$, the probability of it having a copy of $H$ is at most
    \begin{align*}
        C p_n^{-e} = C n^{-2e/\D} = C n^{-q},
    \end{align*}
    where the constant $C = C(H)$ is due to the number of isomorphic copies of $H$ in $K_q$. Then by a union bound, the probability of $D_s$ satisfies
    \begin{align*}
        \P(D_s) \leq C\exp(qs \log n - s \log(q!) - \log(s!) - qs\log n) = C\exp(-s \log (q!) - \log (s!)).
    \end{align*}
    By Stirling's formula, we have $\log(s!) = s \log s - s + O(\log s)$. Therefore 
    \begin{align*}
        s\log(q!) + s\log s - s + O(\log s) \geq s\log s, \quad \forall s \geq 1,
    \end{align*}
    (using $q \geq 3$ and $\log 6 > 1$) finishing the proof.
\end{proof}
    
% \begin{proof}
%     The number of ways we can pick $s$ disjoint subsets of $[n]$ is at most
%     \begin{align*}
%         \f{1}{s!}\binom{n}{q}\binom{n - q}{q} \cdots \binom{n - (s - 1)q}{q} = \f{n!}{s! (q!)^s (n - qs)!} \leq \f{n^{qs}}{s!(q!)^s}.
%     \end{align*}
%     Given a fixed set of vertices $S$ of size $q$, the probability of it having a copy of $H$ is at most
%     \begin{align*}
%         C p_n^{-e} = C n^{-2e/\D} = C n^{-q},
%     \end{align*}
%     where the constant $C = C(H)$ is due to the number of isomorphic copies of $H$ in $K_q$. Then by an union bound, the probability of $D_s$ satisfies
%     \begin{align*}
%         \P(D_s) \leq C\exp(qs \log n - s \log(q!) - \log(s!) - qs\log n) = C\exp(s \log (q!) - \log (s!)).
%     \end{align*}
%     By Stirling's formula, we have $\log(s!) = s \log s - s + O(\log s)$. Therefore for some $C$, we have 
%     \begin{align*}
%         \log(s!) - s \log(q!) \geq \f{1}{2}s \log s, \quad \forall s \geq C,
%     \end{align*}
%     proving the result for all $s \geq C$, and thereby for all $s$ by modifying the prefactor of $C$.
% \end{proof}

The next lemma delivers the control on $\f{2}{\D}e(S) - v(S)$, serving as a replacement for the ``tree-excess edges'' interpretation of $e(S) - v(S) + 1$ in \cite{ganguly} as described in the idea of proof section. This will be useful for furnishing a bound on $\P(F_\ell)$.

\begin{lem}
    \label{lem:spannedstruct}
    If $S$ is a graph spanned by $\ell \geq 2$ copies of $H$ then $\f{2}{\D} e(S) - v(S) \geq C e(S)$, where $C = C(H)$.
\end{lem}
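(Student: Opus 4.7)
The plan is to build $S$ up one copy at a time and track the contribution of each addition to $\f{2}{\D}e(S) - v(S)$. Note that $\ell \geq 2$ forces $\ell_* \geq 2$, since if $\ell_*=1$ then $S \cong H$ has a unique copy of $H$ and cannot admit a spanning by two distinct copies. Fix a minimum spanning family $H_1, \ldots, H_{\ell_*}$. Minimality forces each $H_i$ to carry a \emph{private} edge not contained in any other $H_j$, for otherwise $H_i$ could be dropped. Connectedness of $S$ makes the ``intersection graph'' on $\{H_i\}$ (two copies adjacent iff they share a vertex) connected, so after relabeling I may assume that for each $i \geq 2$, $V_i \df V(H_i) \cap V(\bigcup_{j<i} H_j)$ satisfies $|V_i| \geq 1$. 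Setting $E_i \df E(H_i) \cap E(\bigcup_{j<i} H_j)$, a direct telescoping calculation gives
\begin{align*}
    \f{2}{\D}e(S) - v(S) = \sum_{i=2}^{\ell_*} \Rnd{|V_i| - \f{2|E_i|}{\D}}.
\end{align*}

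The key step is to bound each summand from below by a uniform constant $c_1 = c_1(H) > 0$, split into two cases. If $V_i \subsetneq V(H_i)$, then $H_i[V_i]$ is a proper subgraph of $H_i \cong H$; strict balance of $H$ (together with the finiteness of its isomorphism classes of proper subgraphs) yields a gap $d(H_i[V_i]) \leq \D/2 - \eta$ for some $\eta = \eta(H) > 0$. Since $E_i \subseteq E(H_i[V_i])$, this gives $|V_i| - 2|E_i|/\D \geq 2\eta|V_i|/\D \geq 2\eta/\D$. If instead $V_i = V(H_i)$, the private edge of $H_i$ ensures $|E_i| \leq q\D/2 - 1$, yielding $|V_i| - 2|E_i|/\D \geq 2/\D$. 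So $c_1 = 2\min(\eta, 1)/\D$ handles both cases.

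Combining with the trivial bound $e(S) \leq \ell_* q\D/2$ and $\ell_* - 1 \geq \ell_*/2$ (valid for $\ell_* \geq 2$), one concludes
\begin{align*}
    \f{2}{\D}e(S) - v(S) \;\geq\; (\ell_* - 1)c_1 \;\geq\; \f{c_1 \ell_*}{2} \;\geq\; \f{c_1}{q\D}\,e(S),
\end{align*}
giving the result with $C = c_1/(q\D)$. The most delicate step is the $V_i = V(H_i)$ case: strict balance alone is insufficient here because $H$ itself has density exactly $\D/2$, and it is crucial to invoke minimality of the spanning to produce the private edge that drives the gap. The rest is a uniform finiteness argument over subgraphs of $H$ and a careful choice of ordering.
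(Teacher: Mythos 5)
Your proof is correct and follows the same high-level strategy as the paper's: order the copies so each shares a vertex with the union of its predecessors, and track the increment in $\f{2}{\D}e(\cdot) - v(\cdot)$ copy by copy, getting a per-step gain of $\Omega(1)$ and converting $\ell_*$ to $e(S)$ via $e(S)\leq \ell_* q\D/2$. Where you diverge is in how the per-step gain is extracted. The paper writes the increment as $b_i/\D$ where $b_i$ is the number of boundary edges from $V(H_i)\setminus V(S_{i-1})$ back to $V(S_{i-1})$ and asserts $b_i\geq 1$; this is justified by connectedness of $H_i$ only when $H_i$ brings at least one new vertex, and the paper does not explicitly treat the case $V(H_i)\subseteq V(S_{i-1})$ (which can genuinely occur — e.g.\ three triangles spanning $K_4$), where $b_i = 0$. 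You split exactly on this case. For $V_i\subsetneq V(H_i)$ you reach for strict balance of $H$ to get a uniform density deficit $\eta = \eta(H)$ in $H_i[V_i]$; this is heavier machinery than the paper's elementary boundary-edge count but perfectly valid, since all nonempty proper subgraphs of $H$ have density $\leq \D/2 - \eta$. For $V_i = V(H_i)$, strict balance is useless (the induced subgraph is $H$ itself), and you correctly invoke minimality of $\ell_*$ to produce a private edge of $H_i$, giving $|E_i| \leq q\D/2 - 1$ and hence an increment of at least $2/\D$. This is the delicate point the paper glosses over, and your proof handles it cleanly; the paper's conclusion is still correct, since the same private-edge observation patches that case, but your writeup is more complete on this point. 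Each of the supporting facts you use (the telescoping identity, connectedness of the intersection graph, the uniform density gap, the existence of a private edge under minimality) checks out.
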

\begin{proof}
    Observe that the optimal number of copies $\ell_* = \ell_*(S)$ needed to span $S$ satisfies $\ell_* \geq Ce(S)$ where $C = 1/q^2$, because each copy of $H$ contains at most $q^2$ edges. Order these copies of $H$ in $S$ as $H_1, \ldots, H_{\ell_*}$, 
    such that for each $i$, $V(H_i) \cap V(S_{i - 1}) \neq \nul$, where $S_i$ is the subgraph of $S$ spanned by $H_1, \ldots, H_i$ (this can be done because $S$ is spanned and connected). Note that in general $S_i$ is distinct from the subgraph of $S$ induced by $V(S_i)$. Each edge incident on any $x \in V(H_i) - V(S_{i - 1})$ comes from $H_i$, and hence the degree of any such $x$ is exactly $\D$ in $S_i$. For each $i > 1$, also write $b_i \geq 1$ for the number of edges between $V(H_i) - V(S_{i - 1})$ and $V(S_{i - 1})$ (the number of boundary edges), and let $v_i = |V(H_i) - V(S_{i - 1})|$ (the number of new vertices). See Figure \ref{fig:deterministic_few} below for an illustration of these definitions.

    \begin{figure}[H]
        \centering
    \def\svgwidth{0.4\columnwidth}
    \import{./figures/}{deterministic-few.pdf_tex}

        \caption{Variables in the proof of \rlem{lem:spannedstruct} with four copies of $H$.}
        \label{fig:deterministic_few}
    \end{figure}

    We track the quantity $f_i \df \f{2}{\D} e(S_i) - v(S_i)$. Observe that $e(S_i) \geq e(S_{i - 1}) + b_i + e(H_i - S_{i - 1})$, where $H_i - S_{i - 1}$ is the subgraph of $S_i$ induced by $V(H_i) - V(S_{i - 1})$ (the inequality is due to ignoring edges in $H_i$ with both endpoints in $V(S_{i - 1})$). Crucially, note that
    \begin{align*}
        2e(H_i - S_{i - 1}) + b_i = \D v_i
    \end{align*}
    because both sides count the edges within $V(H_i) - V(S_{i - 1})$ twice. Therefore

    \begin{align*}
        e(S_i) \geq e(S_{i - 1}) + b_i + \f{\D v_i - b_i}{2} = e(S_{i - 1}) + \f{b_i}{2} + \f{\D v_i}{2},
    \end{align*}
    and so,
    \begin{align*}
        f_i = \f{2}{\D}e(S_i) - v(S_i) & \geq \f{2}{\D} e(S_{i - 1}) + \f{b_i}{\D} + v_i - v(S_i) \\
                                       & = \f{2}{\D}e(S_{i - 1}) + \f{b_i}{\D} + v_i - v(S_{i - 1}) - v_i \\
                                       \intertext{(substituting $v(S_i) = v(S_{i - 1}) + v_i$). Therefore, the above is}
                                       & = f_{i - 1} + \f{b_i}{\D} \geq f_{i - 1} + \f{1}{\D}\quad \text{(since $b_i \geq 1$).}
    \end{align*}
    Since $f_1 = 0$ we get $f_{\ell_*} \geq \f{\ell_* - 1}{\D} \geq \f{\ell_*}{2\D}$ as long as $\ell_* \geq 2$ which holds if $\ell \geq 2$ (this is because $\ell \geq 2$ implies that there are more than $e(H)$ edges in $S$). To conclude the proof, recall that $\ell_* \geq e(S)/q^2 = Ce(S)$.
\end{proof}

Using this structural result, we can now provide an upper bound on $\P(F_\ell)$.

\begin{lem}
    $\P(F_\ell) \leq \exp(-C \ell^{2/q} \log n)$ for all $2 \leq \ell \leq n^c$ and all sufficiently large $n$, where $c = c(H), C = C(H)$ are constants.
\end{lem}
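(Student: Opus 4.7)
The plan is a direct union bound over all labeled subgraphs $S \subseteq K_n$ that could witness $F_\ell$. For each candidate $S$ with $v = v(S)$ and $e = e(S)$, the probability $\P(S \subseteq G)$ equals $p_n^e = n^{-2e/\D}$, and two structural handles are already available. Since $\ell \geq 2$ forces $\Q_H(S) \geq 2$, and hence $\ell_*(S) \geq 2$ (as the only spanned graph with $\ell_* = 1$ is $H$ itself, which has $\Q_H = 1$), \rlem{lem:spannedstruct} gives
\begin{align*}
    v(S) - \f{2e(S)}{\D} \leq -Ce(S),
\end{align*}
while \rlem{lem:holder} applied to $S$ yields $\ell \leq C'e(S)^{q/2}$, i.e., $e(S) \geq c\ell^{2/q}$.

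Next I would enumerate labeled subgraphs of $K_n$ by $(v, e)$, using the crude counting bound
\begin{align*}
    \binom{n}{v}\binom{\binom{v}{2}}{e} \leq n^v\Rnd{\f{Cv^2}{e}}^e \leq n^v (C_1 e)^e,
\end{align*}
where the last step absorbs $v \leq Ce$. Combining with the single-subgraph probability and invoking \rlem{lem:spannedstruct} once more,
\begin{align*}
    \P(F_\ell) \leq \sum_{v, e} n^v (C_1 e)^e p_n^e = \sum_{v, e} (C_1 e)^e n^{v - 2e/\D} \leq \sum_{v, e} (C_1 e)^e n^{-Ce}.
\end{align*}

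Under the hypothesis $\ell \leq n^c$ for a constant $c = c(H)$ to be chosen small, we have $e \leq q^2\ell \leq q^2 n^c$, which forces $\log(C_1 e) \leq (C/2)\log n$, so each summand is at most $n^{-Ce/2}$. The sum over $e \geq c\ell^{2/q}$ is then geometric with ratio bounded away from $1$ and is dominated by its smallest term at $e \approx c\ell^{2/q}$; the $v$-sum contributes only an $O(e)$ polynomial prefactor, and since $\ell \geq 2$ keeps $\ell^{2/q}$ bounded below by a positive constant, these prefactors are absorbed into the exponential. This delivers $\P(F_\ell) \leq \exp(-C''\ell^{2/q}\log n)$, as claimed.

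The main obstacle is the entropic cost $(C_1 e)^e$ stemming from the enumeration of $(v, e)$-graphs on labeled vertices: once $e$ grows past a small power of $n$, this entropy overwhelms the probabilistic gain $n^{-Ce}$ and the direct union bound breaks. This is precisely why the regime $\ell \leq n^c$ is non-negotiable for this method; the complementary regime of larger $k_n$ is handled separately by the core/seed arguments outlined in Section \ref{idea}.
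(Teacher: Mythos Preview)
Your proof is correct and follows essentially the same route as the paper's: union bound over candidate spanned graphs stratified by $(v,e)$, with \rlem{lem:spannedstruct} converting $n^{v-2e/\D}$ to $n^{-Ce}$ and \rlem{lem:holder} forcing $e \geq c\ell^{2/q}$, the entropy factor being killed by the hypothesis $\ell \leq n^c$. The only cosmetic difference is that the paper bounds the graph count by $v^{2e} \leq (q\ell)^{2e}$ whereas you use $(C_1 e)^e$ via $v \leq Ce$; both yield a term of the form $(\poly(\ell))^{O(e)}$ and are dispatched identically.
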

\begin{proof}
    For a fixed graph $S$ spanned by $\ell$ copies of $H$, the probability of its occurrence in $G \sim \cG(n, p = n^{-2/\D})$ is at most
    \begin{align*}
        \binom{n}{v(S)} p^{e(S)} \leq n^{v(S) - \f{2}{\D}e(S)},
    \end{align*}
    and as we saw above in \rlem{lem:spannedstruct}, for such a graph, $\f{2}{\D}e(S) - v(S) \geq C e(S)$. Therefore this probability is at most $\exp(-Ce(S)\log n)$.

    Let $F_{\ell, v, e}$ be the event that there is a subgraph of $G$ with $v$ vertices, $e$ edges, and spanned by $\ell$ copies of $H$ (note that $v \leq q\ell$). The number of such $S$ is at most $\binom{v^2}{e} \leq v^{2e}$, so by a direct union bound,
    \begin{align*}
        \P(F_{\ell, v, e}) & \leq \exp(-Ce\log n) v^{2e}  \\
                           & \leq \exp(-Ce\log n) \exp(2e \log (q\ell)),\\
                           & \leq \exp(-Ce\log n),
                           \intertext{by a different choice of $C$, as long as $\ell \leq n^c$ for some small $c$ depending on $H$. Finally, using the fact that $\ell \leq C e^{q/2}$ (using \rlem{lem:holder}) we see that the above is}
                           & \leq \exp(-C\ell^{2/q}\log n).
    \end{align*}

    For a graph spanned by $\ell$ copies of $H$ we have $v, e \leq C\ell$, so by union bounding over all such possibilities, we get
    \begin{align*}
        \P(F_\ell) \leq C\ell^2 \exp(-C\ell^{2/q}\log n) \leq \exp(-C\ell^{2/q}\log n),
    \end{align*}
    (up to changing $C$) for sufficiently large $n$.
\end{proof}

As alluded to earlier, we will now combine the estimates above using a union bound. The following definition will be useful.
\begin{defn}
    \label{defn:etilde}
    For a tuple $(c_1, c_2, \ldots, c_t)$, define $\tE_{c_1, \ldots, c_t}$ to be the event that in the random graph $G \sim \cG(n, p_n)$, there are $c_1 + \cdots + c_t$ disjoint spanned graphs, with $c_i$ of them spanned by $2^i$ copies of $H$, for all $i$.
\end{defn}
% \begin{rmk}[\comment{Important}]
Observe that $\tE_{c_1, \ldots, c_t}$ is a special case of events of the form $E_{\ell_1, \ldots, \ell_m}$ where $m = \sum_{i = 1}^t c_i$ and the $\ell_i$s are powers of 2. Also, if a graph $G$ satisfies $E_{\ell_1, \ldots, \ell_m}$, then there is a tuple $(c_1, \ldots, c_t)$ such that it also satisfies $\tE_{c_1, \ldots, c_t}$, with
    \begin{align*}
        \sum_{i = 1}^{t} c_i 2^i \geq \f{1}{2}\sum_{j = 1}^m \ell_j.
    \end{align*}
    This is essentially done via ``dropping'' copies of $H$ till the components are spanned by $2^i$ components for some $i$. More precisely, for each valid $i$, define
    \begin{align}
        \label{eq:cibound}
        c_i \df |\{ j : 2^i \leq \ell_j < 2^{i + 1} \}|. 
    \end{align}
    We claim that $G \in \tE_{c_1, \ldots, c_t}$. Let $U_1, \ldots, U_m$ be some choice of disjoint spanned subgraphs of $G$ witnessing $E_{\ell_1, \ldots, \ell_m}$. Choose $j$ such that $U_j$ is spanned by $\ell_j$ copies of $H$ and let $i$ be such that $2^i \leq \ell_j < 2^{i + 1}$. Order these copies of $H$ such that the subgraph induced by any number of copies from the beginning is connected (which can always be done because $U_j$ is connected). Then take the union of the first $2^i$ copies. By construction this is a connected subgraph spanned by $2^i$ copies of $H$. Doing this for each $j$ proves the claim.

    As we will shortly see, $\P(E_{\ell_1, \ldots, \ell_m})$ is comparable to $\P(\tE_{c_1, \ldots, c_t})$, whereas the number of possible choices is a lot less for $(c_1, \ldots, c_t)$ than for $(\ell_1, \ldots, \ell_m)$, facilitating an efficient union bound.
% \end{rmk}

\begin{lem}
    \label{lem:few}
    For $2 \leq k_n \leq n^c$ (for a sufficiently small $c = c(H)$), we have
    \begin{align*}
        \P(\Q \geq k_n) \leq C\Exp{-C' \min(k_n \log k_n, k_n^{2/q}\log n)},
    \end{align*}
    where $C = C(H), C' = C'(H)$ are constants depending only on $H$.
\end{lem}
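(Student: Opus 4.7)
The plan is to combine the spanned-graph decomposition sketched before the statement with a union bound coarsened dyadically via the events $\tE_{c_1,\ldots,c_t}$. First, if $\Q \geq k_n$ then deleting every edge not contained in a copy of $H$ and splitting the remainder into connected components yields disjoint $H$-spanned subgraphs whose total $H$-count is still at least $k_n$. Letting $s$ be the number of singleton components ($\cong H$) and $\ell_1,\ldots,\ell_m \geq 2$ the copy counts of the larger ones, we have $G \in E^s_{\ell_1,\ldots,\ell_m}$ for some tuple with $s + \sum_j \ell_j \geq k_n$. Combining BK \eqref{eq:bk} with \rlem{lem:disjointprob} and the $\P(F_\ell)$ bound gives
\begin{align*}
\P(E^s_{\ell_1,\ldots,\ell_m}) \leq C\exp\!\Bigl(-s\log s - C\log n \sum_j \ell_j^{2/q}\Bigr),
\end{align*}
but a direct union bound over $(\ell_1,\ldots,\ell_m)$ fails because there are far too many such tuples.

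To compress the parameter space I would apply the dyadic coarsening already set up via \eqref{eq:cibound}: each such $G$ also lies in $D_s \circ \tE_{c_1,\ldots,c_t}$ for some tuple with $2\sum_i c_i 2^i \geq \sum_j \ell_j$. BK then yields
\begin{align*}
\P(D_s \circ \tE_{c_1,\ldots,c_t}) \leq C\exp\!\Bigl(-s\log s - C\log n \sum_i c_i (2^i)^{2/q}\Bigr),
\end{align*}
and since $\ell_j \leq 2 \cdot 2^i$ on each dyadic block, the new exponent is within a constant factor of the old one, so no essential sharpness is lost. It now suffices to union bound over the coarser tuples $(s, c_1, \ldots, c_t)$ satisfying $s + 2\sum_i c_i 2^i \geq k_n$.

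The central step is to lower bound this exponent by $C'\min(k_n\log k_n,\, k_n^{2/q}\log n)$ uniformly in admissible tuples. I would split on whether $s \geq k_n/4$ (which immediately gives $s\log s \geq Ck_n\log k_n$) or $K := \sum_i c_i 2^i \geq k_n/8$. In the latter case one needs $\sum_i c_i (2^i)^{2/q} \geq CK^{2/q}$. Either some index has $c_i \geq 1$ with $2^i \geq K/2$, giving $\sum c_i (2^i)^{2/q} \geq (K/2)^{2/q}$; or every contributing index has $2^i < K/2$, in which case factoring $c_i 2^i = c_i (2^i)^{2/q} \cdot (2^i)^{1-2/q}$ and using $\max_i 2^i < K/2$ yields $K \leq (K/2)^{1-2/q} \sum_i c_i (2^i)^{2/q}$, hence $\sum_i c_i (2^i)^{2/q} \geq 2^{1-2/q} K^{2/q}$. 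Multiplying by $\log n$ provides the second branch of the minimum.

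Finally, the number of admissible $(s, c_1, \ldots, c_t)$ is $\exp(O(\log^2 k_n))$ since $t = O(\log k_n)$ and $c_i \leq O(k_n/2^i)$. For $k_n \leq n^c$ with $c = c(H)$ sufficiently small, this entropy is dominated by the exponent above: $k_n\log k_n \geq \log^2 k_n$ for every $k_n \geq 2$, and in the regime where $k_n^{2/q}\log n$ is the smaller branch one also has $k_n^{2/q}\log n \gg \log^2 k_n$, while bounded $k_n$ is trivial as the tuple count is itself $O(1)$. Summing over tuples then only inflates the constant, giving the lemma. The main obstacle I foresee is the case-based minimization over integer $c_i$ with sublinear exponent $2/q$; the dichotomy above is what cleanly handles both the \emph{one big component} and \emph{many small components} extremes with a single argument, and it is precisely this concavity which gives rise to the phase transition.
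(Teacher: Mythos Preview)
Your approach is essentially the paper's: spanned-graph decomposition, dyadic coarsening to $\tE_{c_1,\ldots,c_t}$, BK, then a concavity lower bound on the exponent followed by an entropy count of $\exp(O(\log^2 k_n))$ tuples. Your dichotomy on $\max_i 2^i$ is a valid hands-on substitute for the paper's one-line appeal to the subadditivity inequality $\sum_j x_j^{2/q} \geq (\sum_j x_j)^{2/q}$ (applied with the $x_j$ being the individual $\ell_j$, or equivalently $c_i$ copies of $2^i$), and your split on $s$ versus $K$ plays the role of the paper's \rlem{lem:competition}.

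The one point that needs tightening is the entropy count. Your claims $t = O(\log k_n)$ and $c_i \leq O(k_n/2^i)$ presuppose $K = \sum_i c_i 2^i = O(k_n)$ and $s = O(k_n)$, but from $s + 2K \geq k_n$ alone you have no upper bound on either; a priori $s$ and the $\ell_j$ can be as large as the graph allows. The paper fixes this in one line before the union bound: since a spanned graph spanned by $\ell$ copies contains a connected subgraph spanned by any $\ell' \leq \ell$ copies, any $G \in E^s_{\ell_1,\ldots,\ell_m}$ with $s + \sum_j \ell_j \geq k$ also lies in some $E^{s'}_{\ell'_1,\ldots}$ with $s' + \sum_j \ell'_j = k$ exactly, so one may restrict to tuples with $s \leq k$ and $\sum_j \ell_j \leq k$, whence $K \leq k$ and the entropy bound follows. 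With that reduction inserted, your argument goes through.
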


As described earlier, this lemma demonstrates the transition in tail behavior promised in our main theorem. However, this result only holds for $k_n$ at most a small polynomial of $n$, and one must show that there are \emph{no other transitions}, i.e., this behavior persists all the way up to the maximum possible value of $k_n$. This is achieved in the next section after this proof.

\begin{proof}
    We abbreviate $k = k_n$ throughout this proof. Recall from the discussion below \eqref{eq:bk} that the event $\{\Q \geq k\}$ is equivalent to 
    \begin{align*}
        \bigcup_{\substack{s, m \geq 0, \ell_i \geq 2 \\ s + \sum_{i \leq m} \ell_i \geq k}} E^s_{\ell_1, \ldots, \ell_m}.
    \end{align*}
    However any graph satisfying this also satisfies the event
    \begin{align*}
        \bigcup_{\substack{s, m \geq 0, \ell_i \geq 2 \\ s + \sum_{i \leq m} \ell_i = k}} E^s_{\ell_1, \ldots, \ell_m},
    \end{align*}
    because a spanned graph spanned by $\ell$ copies of $H$ has a subgraph spanned by $\ell'$ copies of $H$ for any $\ell' \leq \ell$ by reasoning as below \eqref{eq:cibound}. We therefore focus on bounding the probability of the latter.

    Using the BK inequality \eqref{eq:bk} and the bound on $\P(F_\ell)$ we obtain
    \begin{align*}
        \P(\tE_{c_1, \ldots, c_t}) &\leq \prod_{i = 1}^t \P(F_{2^i})^{c_i} \\
                                   &\leq \Exp{-C \log n\sum_{i = 1}^t c_i 2^{2i/q}}
    \end{align*}

    We use this bound and the fact that for each collection $\ell_1, \ldots, \ell_m$ (with each $\ell_i \geq 2$), we can find a vector $c_1, \ldots, c_t$ with 
    \begin{align}
        \sum_{i \leq m } \ell_i \geq \sum_{j \leq t} c_i 2^i \geq \f{1}{2}\sum_{i \leq m} \ell_i,
        \label{eq:condc}
    \end{align}
    such that $E_{\ell_1, \ldots, \ell_m} \subseteq \tE_{c_1, \ldots, c_t}$. This allows us to conclude that for any $k$,
    \begin{align*}
        \P[\bigcup_{\substack{m \geq 1, \ell_i \geq 2 \\ \sum_{i \leq m} \ell_i = k}} E_{\ell_1, \ldots, \ell_m}] & \leq 
        \P[\bigcup_{\substack{t \geq 1, c_i \geq 0 \\ k \geq \sum_i c_i 2^i \geq  k/2}} \tE_{c_1, \ldots, c_t}] \\
                                                                                                                  & \leq \exp((\log k)^2)\Exp{-C\log n \cdot \min \sum_{i \geq 1} c_i 2^{2i/q}},
                                                                                                                  \intertext{where the $\min$ is over all possible $c_i$s satisfying the condition in \eqref{eq:condc}. The $\exp((\log k)^2)$ term is a simple upper bound on the number of possible tuples $(c_1, \ldots, c_t)$ where $t \leq \log_2 k$ and $0 \leq c_i \leq k/2$. By an elementary inequality (see \rlem{lem:concave} in the Appendix) we see that the above is }
                                                                                                                  &\leq \Exp{C\Rnd{\log^2 k - k^{2/q} \log n}}.
    \end{align*}
    Now observe that $E^s_{\ell_1, \ldots, \ell_m} = D_s \circ E_{\ell_1, \ldots, \ell_m}$. Another application of the BK inequality and our estimate of $\P(D_s)$ from \rlem{lem:disjointprob} yields
    \begin{align*}
        \P[\bigcup_{\substack{s, m \geq 0, \ell_i \geq 2 \\ s + \sum_{i \leq m} \ell_i = k}} E^s_{\ell_1, \ldots, \ell_m}] & 
        \leq \sum_{s \leq k} \P(D_s) \P[{\bigcup_{\substack{m \geq 0, \ell_i \geq 2 \\ \sum_{i \leq m} \ell_i = k - s}} E_{\ell_1, \ldots, \ell_m}}] \\
                                                                                                                           &\leq C\exp(\log^2 k + \log k)\Exp{-C\min_{1 \leq s \leq k} \Rnd{s\log s + (k - s)^{2/q}\log n}},
                                                                                                                           \intertext{at which point we invoke \rlem{lem:competition} from the Appendix to obtain that the above is }
                                                                                                                           &\leq C\exp(2\log^2 k)\Exp{-C\min(k \log k, k^{2/q}\log n)}.
    \end{align*}
    Finally observe that since $2\log^2 k \ll \min(k \log k, k^{2/q}\log n)$ as $k \to \infty$, by modifying constants we may drop the prefactor of $\exp(2\log^2 k)$, yielding the required result.
\end{proof}

%}}}1

\section{Many copies}
%{{{1
\label{sec:many}

In this section we will consider the case of large $k_n$ and demonstrate that the behavior described in \rlem{lem:few} actually persists all the way up to the maximum possible value of $k_n$. However our results here will only hold for $k_n$ such that $k_n \gg f(n)$ for all $f(n) = \poly\log n$ (we will assume this lower bound on $k_n$ throughout this section). But this suffices to complete the proof of our main theorem since \rlem{lem:few} holds even for $k_n$ that are small polynomial powers of $n$.

\fix{Our arguments here will adapt a version of the techniques from \cite{samotij} and hence is related to the ideas in \cite{suman}. The main idea is to consider a class of subgraphs (called ``cores'') which are present with high probability in $G$ when conditioned to have $k_n$ triangles. Once such a subgraph is planted in $G$, the expectation of $\Q$ is almost as much as the required number of $k_n$. As in \cite{samotij}, before defining a ``core'' we define a relaxed version called a ``seed''.}

% \red{Suppose $w_n \to 0$ is a sequence, which should be thought of as slowl}
Let $w_n = 1/\log n$ in the sequel. In principle, $w_n$ can be taken to be anything that is $1/\poly\log n$, but we fix it so for concreteness.

\begin{defn}[Seed]
    \label{def:seed}
    Let $S \subseteq K_n$ be a (labeled) graph on the vertex set $V = [n]$. We call $S$ a seed if and only if:
    \begin{enumerate}
        \item  $\E_S \Q \geq (1 - w_n)k_n$,
        \item  $e(S) \leq C_s w_n^{-1}k_n^{2/q}\log(1/p_n)$,
    \end{enumerate}
    where $e(S)$ is the number of edges in $S$. Here $\E_S \Q = \E[Q \mid S \subseteq K_n]$ is the conditional expectation of the number of copies of $H$ in $G$ given that the edges in $S$ are present in $G$, and $C_s$ is a constant depending on $H$ that will be fixed later.
\end{defn}

Following the arguments of \cite{samotij} one can deduce the following.
\begin{lem}[Must have a seed]
    \label{lem:seed}
    There is a sequence $\xi_n \to 0$ such that
    \begin{align*}
        \P(\Q \geq k_n) \leq (1 + \xi_n)\P(\text{$G$ has a seed})
    \end{align*}
    where $G \sim \cG(n, p_n)$.
\end{lem}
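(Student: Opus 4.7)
The plan is to mirror the argument of Harel, Mousset, and Samotij \cite{samotij}, adapted to a general regular $H$ using the counting bounds established in Section \ref{sec:holder}. The core of the argument is a deterministic extraction procedure that produces a candidate seed from any graph $G$ with many copies of $H$, combined with a probabilistic accounting showing the extraction costs at most a $(1+\xi_n)$ factor relative to simply requiring the seed to be present in $G$.

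I would begin by setting up a greedy extraction. Start with $S_0 = \emptyset$; at step $t$, if $\E_{S_t}\Q \geq (1 - w_n) k_n$, terminate with output $S_t$. Otherwise, select an edge $e_t \in E(G) \setminus S_t$ whose inclusion maximally boosts the conditional expected count, and set $S_{t+1} = S_t \cup \{e_t\}$. By construction, whenever the procedure terminates, the output $S$ lies in $G$ and satisfies condition (1) of Definition \ref{def:seed}. The non-trivial tasks are to show (a) the size bound (2) in Definition \ref{def:seed}, and (b) that the procedure indeed terminates whenever $\Q_H(G) \geq k_n$, except on an event of negligible probability.

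The quantitative heart is the edge-count bound. Here I would use \rlem{lem:holder} (and its refinement \rlem{lem:fixedholder}) to estimate how much each added edge can increase $\E_S\Q$. Since $\E_S\Q$ is controlled by a polynomial in $e(S)$ of degree $q/2$ plus lower-order stochastic contributions from edges outside $S$, and each added edge boosts $\E_S\Q$ by a multiplicative factor of order at most $1/p_n$, a telescoping computation caps the number of iterations needed to reach $(1 - w_n) k_n$ by $C w_n^{-1} k_n^{2/q} \log(1/p_n)$, which is precisely the bound required.

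For the passage to the probabilistic statement, writing $\Phi(G)$ for the output of the procedure, one decomposes
\begin{align*}
\P(\Q \geq k_n) \leq \P(\Phi \text{ terminates successfully}) \leq \sum_{S} \P(\Phi(G) = S),
\end{align*}
where $S$ ranges over potential seeds. Each term is controlled by $\P(S \subseteq G, S \text{ is a seed})$, since the extraction only ever adds edges of $G$ and the terminated output is a seed by construction; a standard concentration argument, handling the rare event that $G$ has anomalously many edges on a vertex set, absorbs the overcount into a $(1+\xi_n)$ multiplicative factor. The main obstacle I foresee is the edge-count bound: tracking the geometric growth of $\E_{S_t}\Q$ per added edge for an arbitrary regular $H$ requires the three-case estimate in \rlem{lem:fixedholder} to correctly balance the contributions from deterministic edges of $S$ against stochastic contributions from edges outside $S$ (the terms $B_1, B_2, B_3$ there), and this is exactly the novel ingredient furnished by Section \ref{sec:holder}.
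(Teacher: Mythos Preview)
Your approach is genuinely different from the paper's, and as sketched it has a gap in the edge-count bound.

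The paper does \emph{not} build a seed by greedy extraction. Instead it bounds $\P(Q \geq k_n,\ G\text{ has no seed})$ via a high moment: writing $Z = \1_{\{G\text{ has no seed}\}}$, one expands $\E[Q^\ell Z]$ as a sum over tuples $(\H_1,\ldots,\H_\ell)$ of copies of $H$ and peels off the last factor using that if $\H_1\cup\cdots\cup\H_{\ell-1}$ has at most $C_s w_n^{-1}k_n^{2/q}\log(1/p_n)$ edges and is not a seed, then $\E_{\H_1\cup\cdots\cup\H_{\ell-1}}Q \leq (1-w_n)k_n$ by definition. This yields $\E[Q^\ell Z]\leq (1-w_n)^\ell k_n^\ell$ for all $\ell$ up to the edge-count threshold, and taking $\ell$ at that threshold gives a bound beating the clique lower bound on $\P(G\text{ has a seed})$. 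Crucially, neither \rlem{lem:holder} nor \rlem{lem:fixedholder} is used here; the only inputs are Definition~\ref{def:seed} and the clique lower bound. (This \emph{is} the argument of \cite{samotij}; the greedy extraction you describe is not.)

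The gap in your sketch is the direction of the key inequality. To cap the number of greedy iterations from above you need a \emph{lower} bound on the increment $\E_{S_t\cup\{e\}}Q-\E_{S_t}Q$ for the best edge $e\in G\setminus S_t$. Your stated ingredient, ``each added edge boosts $\E_S Q$ by a multiplicative factor of order at most $1/p_n$,'' is an \emph{upper} bound on the increment and yields only a lower bound on the number of steps. Likewise, \rlem{lem:holder} and \rlem{lem:fixedholder} give upper bounds on (conditional) copy counts, which translate to lower bounds on $e(S)$ once $\E_S Q\approx k_n$, again the wrong direction for condition~(2). A pigeonhole on the telescoping sum $\E_G Q-\E_{S_t}Q\geq w_n k_n$ gives an increment of only $w_n k_n/e(G)$, and $e(G)$ is typically of order $n^{2-2/\D}$, far too large; the supermodularity of $S\mapsto\E_S Q$ means early greedy increments are smallest, so this cannot be rescued easily. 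Without a genuine lower bound on the greedy increment, the edge-count bound in Definition~\ref{def:seed} is unproved, and your ``standard concentration argument'' in the last paragraph has nothing to work with.
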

%%%%%%%
\begin{proof}
    For a subgraph $S \subseteq K_n$, set $Z_S \df \1_{\{G \cap S \text{ has no seed}\}}$ (we will also let $Z \df Z_{K_n}$) and for any $\H$, a subset of edges of $K_n$ forming an isomorphic copy of $H$, denote $Y_\H \df \1_{\{\text{all edges in }\H\text{ are present in }G\}}$. Then observe that 
    \begin{align}
        \label{eq:qbndorig}
        \P(Q \geq k_n, \text{$G$ has no seed}) = \P(\Q Z \geq k_n) \leq \f{\E[\Q^\ell Z]}{k_n^\ell},
    \end{align}
    for any $\ell \geq 1$ (we will choose $\ell$ later). Since $\Q = \sum_\H Y_\H$ (sum over all possible edge subsets of $K_n$ which are isomorphic to $H$).
    \begin{align*}
        \E[\Q^\ell Z] &= \sum_{\H_1,  \ldots, \H_\ell} \E[Y_{\H_1}\cdots Y_{\H_\ell} \cdot Z] \\
                      % &\leq \sum_{\H_1, \ldots, \H_\ell} \E[Y_{\H_1}\cdots Y_{\H_\ell} Z_{\H_1 \cup \ldots \cup \H_{\ell}}]\\
                        &\leq \sum_{\H_1, \ldots, \H_\ell} \E[Y_{\H_1}\cdots Y_{\H_\ell} \cdot Z_{\H_1 \cup \ldots \cup \H_{\ell - 1}}], \qquad\qquad \text{because $Z_S \geq Z_T$ if $S \subseteq T$},\\
                        &= \sum_{\H_1, \ldots, \H_\ell} \P(\H_1, \ldots, \H_\ell \text{ are present}, \H_1 \cup \ldots \cup \H_{\ell - 1} \text{  has no seed}), 
                        \intertext{But $\H_1 \cup \ldots \cup \H_{\ell - 1}$ having (or not having) a seed is a deterministic fact, so we can reduce the sum over all collections of $\H_i$ where this holds. We can group the terms in the sum above by $\H_1, \ldots, \H_{\ell - 1}$ to obtain}
                        &= \sum_{\H_1, \ldots, \H_{\ell - 1} \text{ with no seed }} \P(\H_1, \ldots, \H_{\ell - 1} \text{ are present}) \sum_{\H_\ell} \E[Y_{\H_\ell} \mid Y_{\H_i} = 1, \forall i \leq \ell - 1] \\
                        &= \sum_{\H_1, \ldots, \H_{\ell - 1} \text{ with no seed }} \E[Y_{\H_1}\cdots Y_{\H_{\ell - 1}} Z_{\H_1 \cup \ldots \cup \H_{\ell - 1}}] \underbrace{\E_{\H_1 \cup \ldots \cup \H_{\ell - 1}} \Q}_{\leq (1 - w_n)k_n},
    \end{align*}
    where we use the fact that $\H_1 \cup \ldots \cup \H_{\ell - 1}$ has no seed, and in particular, is not a seed itself. But we can only apply this reasoning if the total number of edges in $\H_1 \cup \ldots \cup \H_{\ell - 1}$ is at most $C w_n^{-1} k_n^{2/q}\log(1/p_n)$ (see Definition \ref{def:seed}) which happens at least as long as 
    \begin{align}
        \label{eq:bndl}
     \ell \leq C_s q^{-2} w_n^{-1} k_n^{2/q} \log(1/p_n),
    \end{align}    
    % \red{MISTAKE, edges in $H$ are $q^2$ many not $q$ many}
    because $\H_1 \cup \ldots \cup \H_{\ell - 1}$ has at most $q^2\ell$ edges (and thus the only reason it is not a seed is that the conditional expectation condition is violated). Observe that the bound obtained for $\E[Q^\ell Z]$ has a recursive structure, which we can iterate in $\ell$ to obtain
   \begin{align*}
        \E[Q^\ell Z] \leq (1 - w_n)^\ell k_n^\ell,
    \end{align*}
    giving us the eventual upper bound via \eqref{eq:qbndorig}
    \begin{align*}
        \P(Q \geq k_n, \text{$G$ has no seed}) \leq (1 - w_n)^\ell
    \end{align*}
    for all $\ell$ satisfying the bound in \eqref{eq:bndl}.
    Now observe that a clique of size $c k_n^{1/q}$ has at least $k_n$ copies of $H$ (where $c = c(H)$ is a constant), and it has $C k_n^{2/q}$ many edges. Therefore by choosing $C_s$ in the definition of a seed large enough, we can make this clique a seed. The probability of its occurrence is
    \begin{align*}
        \P(\text{$c k_n^{1/q}$ clique in $G$}) \geq \Exp{-C k_n^{2/q}\log n}
    \end{align*}
    (recall that various occurrences of $C$ can mean different $H$-dependent constants).
    Choose $\ell = \floor{C_sq^{-2}w_n^{-1}k_n^{2/q}\log(1/p_n)}$ to get:
    \begin{align*}
        \P(Q \geq k_n, \text{$G$ has no seed}) &\leq (1 - w_n)^\ell \\
                                                 &\leq \Exp{-C q^{-2}k_n^{2/q}\log(1/p_n)} \\
                                                 &\leq \xi_n \P(c k_n^{2/q} \text{ clique})
                                                 \intertext{where $\xi_n \to 0$ as long as $C_s/q^2$ is large enough. But as we saw above, this is a seed, so we can replace the bound above by}
                                                 &\leq \xi_n \P(G_n \text{ has seed}),
    \end{align*}
    which finishes the proof. 
\end{proof}

The previous lemma reduces our task to upper bounding $\P(G \text{ has a seed})$. This is essentially achieved via union-bounding over all possible seeds. But the number of possible seeds is extremely high, so we instead consider a more restrictive structure called ``cores'', as defined below. Each seed contains a core, so we have a version of the preceding lemma with $\{\text{$G$ has a seed}\}$ replaced by $\{\text{$G$ has a core}\}$.

\begin{defn}[Core]
    \label{def:core}
    % \comment{Think about the whether cores can have isolated vertices.}
    Let $\G \subseteq K_n$ be a (labeled) graph on $V = [n]$. We call it a core if it has no isolated vertices and,
    \begin{enumerate}
        \item  $\E_\G Q \geq (1 - 2w_n)k_n$,
        \item  $e(\G) \leq C_sw_n^{-1}k_n^{2/q}\log(1/p_n)$,
        \item  $\E_\G Q - \E_{\G - f} Q \geq t_n \df \displaystyle\f{w_n^2 k_n^{(q - 2)/q}}{C_s \log (1/p_n)} \Rnd{= \Om(k_n^{(q - 2)/q})}$, for all edges $f \in E(\G)$,
    \end{enumerate}
    that is, it is a (slightly weaker) seed, but now each edge has to contribute significantly.
\end{defn}

As indicated earlier, every seed contains a core:

\begin{lem} If $G$ is a seed, then there is a subset $\G \subseteq G$ which is a core.
\end{lem}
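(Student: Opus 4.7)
The plan is a standard greedy peeling argument. Starting from $\G_0 = G$, iteratively look for an edge $f \in E(\G_i)$ whose removal satisfies
\begin{align*}
\E_{\G_i} Q - \E_{\G_i - f} Q < t_n,
\end{align*}
i.e.\ $f$ contributes negligibly to the expected count. If such an $f$ exists, set $\G_{i+1} = \G_i - f$ and continue; otherwise stop. Since each step strictly decreases the number of edges, the process terminates at some $\G = \G_N \subseteq G$. Finally, discard any vertex of $[n]$ that is isolated in $\G$ so as to satisfy the ``no isolated vertices'' requirement (this does not affect the edge set or any of the expectations involved).

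By construction, no edge of $\G$ has contribution below $t_n$, so condition (3) in Definition~\ref{def:core} holds. Condition (2) on the edge count also holds immediately, since $e(\G) \leq e(G)$ and $G$ is a seed, which gives $e(\G) \leq C_s w_n^{-1} k_n^{2/q}\log(1/p_n)$.

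The only nontrivial check is condition (1), the expectation lower bound. Observe that adding edges to a (deterministic) subgraph can only increase the conditional expected number of copies of $H$, so every removal decreases $\E Q$, and the total decrease telescopes as
\begin{align*}
\E_G Q - \E_\G Q = \sum_{i=0}^{N-1} \Big(\E_{\G_i} Q - \E_{\G_{i+1}} Q\Big) < N \cdot t_n \leq e(G) \cdot t_n.
\end{align*}
Using the seed edge bound and plugging in $t_n = w_n^2 k_n^{(q-2)/q}/(C_s \log(1/p_n))$, this estimate gives
\begin{align*}
e(G) \cdot t_n \leq C_s w_n^{-1} k_n^{2/q} \log(1/p_n) \cdot \f{w_n^2 k_n^{(q-2)/q}}{C_s \log(1/p_n)} = w_n k_n.
\end{align*}
Combined with $\E_G Q \geq (1 - w_n) k_n$ (from $G$ being a seed), we conclude $\E_\G Q \geq (1 - 2w_n) k_n$, which is precisely condition (1). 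The main (and only real) delicate point here is that the definition of $t_n$ is designed exactly so that this telescoping bound matches, using the seed's edge budget; no other obstacle arises.
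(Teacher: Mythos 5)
Your proof is correct and is essentially the same greedy peeling argument the paper uses: iteratively delete any edge whose marginal contribution to $\E Q$ is below $t_n$, observe termination, and note that the telescoping total decrease is at most $e(G)\cdot t_n \leq w_n k_n$, so condition (1) survives while conditions (2) and (3) hold by construction. The extra remarks (monotonicity of $\E_\G Q$ in $\G$, discarding isolated vertices) are fine but not load-bearing.
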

\begin{proof}
    We iteratively delete edges violating (3). Removing such an edge does not reduce $\E_G Q$ by more than $t_n$. Therefore over all $e(G)$ edges, the maximum possible decrease is $t_n e(G) \leq w_nk_n$, so that the reduced graph still satisfies (1). If we cannot remove an edge, it is already a core.
\end{proof}

The strict restrictions on cores allow us to prove a few crucial structural results about them.

\begin{lem}[Cores have many copies of $H$]
    \label{lem:manycopies}
    For a core $\G$, let $\Q_H(\G)$ be the number of copies of $H$ in $\G$. Then $\Q_H(\G) \geq k_n - o(k_n)$. In particular, via \rlem{lem:holder}, there is a $C$ such that $e(\G) \geq C k_n^{2/q}$.
\end{lem}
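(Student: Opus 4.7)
We aim to show that $R := \E_\G Q - \Q_H(\G)$, the expected number (conditional on $\G \subseteq G$) of copies of $H$ in $G$ that use at least one edge outside $\G$, satisfies $R = o(k_n)$. Granted this, property (1) of the core gives $\Q_H(\G) \geq (1 - 2w_n)k_n - R = k_n - o(k_n)$, and the lower bound $e(\G) \geq C k_n^{2/q}$ follows immediately by inverting the regular-graph case of \rlem{lem:holder}, which yields $\Q_H(\G) \leq C\, e(\G)^{q/2}$.

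Since edges of $G$ outside $\G$ are independent Bernoulli$(p_n)$ under the conditional measure, we write $R = \sum_{j=1}^{e(H)} p_n^j N_j$, where $N_j$ is the number of injective graph homomorphisms $\phi : V(H) \to [n]$ with exactly $j$ of the image edges lying outside $E(\G)$. To bound $N_j$, we decompose over the ``inside'' edge set $F = \{e \in E(H) : \phi(e) \in E(\G)\}$, of size $e(H) - j$: the number of injective $F$-preserving maps $V(F) \to V(\G)$ is bounded via \rlem{lem:holder} (applied to $F$, which is a subgraph of $H$ with maximum degree $\leq \D$), while the remaining $q - v(F)$ vertices of $V(H)$ may be placed in at most $n^{q - v(F)}$ ways. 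Summing over the $\binom{e(H)}{j} = O_H(1)$ choices of $F$ and using $e(H) = q\D/2$,
\begin{align*}
N_j \;\leq\; C(H)\, n^{q - 2(e(H)-j)/\D}\, e(\G)^{(e(H)-j)/\D} \;=\; C(H)\, n^{2j/\D}\, e(\G)^{q/2 - j/\D}.
\end{align*}
Since $p_n = n^{-2/\D}$, the factor $n^{2j/\D}$ cancels exactly against $p_n^j$, giving the key estimate $p_n^j N_j \leq C(H)\, e(\G)^{q/2 - j/\D}$.

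By property (2) of the core, $e(\G) \leq C w_n^{-1} k_n^{2/q} \log(1/p_n) = \O(k_n^{2/q})$; since the exponent $q/2 - j/\D$ is maximized at $j = 1$, for every $j \geq 1$ we have $p_n^j N_j \leq e(\G)^{q/2 - 1/\D} = \O(k_n^{1 - 2/(q\D)})$. Summing over the $O_H(1)$ values of $j$ and using the standing assumption that $k_n$ grows faster than every $\poly\log n$, we obtain $R = o(k_n)$, completing the proof. The one subtlety to verify is that summing over $F$ in fact over-counts each $\phi$ with fewer than $j$ outside edges by a positive combinatorial factor, but since this only strengthens $N_j \leq \sum_F(\cdots)$, the bound is still valid; no step beyond this algebraic bookkeeping should pose any difficulty.
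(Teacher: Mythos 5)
Your proof is correct and uses the same fundamental tools as the paper --- \rlem{lem:holder} to control the count of the part of each copy landing inside $\G$, followed by a direct $n^{q-v(F)}$ count for the remaining vertices --- but your bookkeeping is noticeably cleaner. The paper decomposes each copy $\H$ by a quadruple $(b,c,X,Y)$, separately tracking boundary edges, missing edges within $V(\G)$, and the outside subgraph, and must solve a small counting system to express $|E(X)|$ and $|E(Y)|$ before the cancellation becomes visible. You instead index only by $j$, the number of image edges outside $E(\G)$, and the inside edge set $F$; the regularity identity $e(H)=q\D/2$ then makes the cancellation $p_n^j\, n^{2j/\D}=1$ transparent, and \rlem{lem:holder} applied to $F$ gives the exponent $q/2 - j/\D$ directly (one can check, using $j = |E(X)|+b+c$ and $2|E(X)|+b = x\D$, that this equals the paper's exponent $q/2 - x/2 - (b+2c)/(2\D)$). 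Two small points: the display $R = \sum_j p_n^j N_j$ should be an inequality or carry a $1/|\mathrm{Aut}(H)|$ factor, since $Q$ counts unlabeled copies while $N_j$ counts injective homomorphisms; and replacing $v(\G)$ by $n$ inside the \rlem{lem:holder} bound is legitimate only because $v(F) - 2|F|/\D \geq 0$ (as $F$ has max degree $\leq\D$), which is worth stating.
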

\begin{proof}
    We denote by $\H$ an isomorphic copy of $H$ in $K_n$, i.e. a subset of edges which are isomorphic to $H$. Then we know that
    \begin{align*}
        (1 - 2w_n)k_n \leq \E_\G \Q = \sum_{\H} p_n^{|\H - \G|} \leq \Q_H(\G) + \sum_{|\H - \G| \geq 1} p_n^{|\H - \G|},
    \end{align*}
    where $\H - \G$ is the collection of edges in $\H$ that are not in $\G$.
    For a given copy $\H$ in $K_n$, fix an isomorphism $\phi_\H$ mapping $E(H) \to \H$. Let $c$ be the number of edges in $\H - \G$ with both endpoints in $V(\G)$, $b$ be the number of edges in $\H - \G$ with one endpoint in $V(K_n) - V(\G)$ and other in $V(\G)$, and let $x$ be the number of vertices in $\H$ outside $\G$. 
    Also denote by $X \subseteq H$ the subgraph which is mapped (under $\phi_\H$) to outside $\G$ (so that $x = |V(X)|$), and let $Y \subseteq H$ be the subgraph of $H$ whose vertices are mapped (under $\phi_\H$) to $\G$ and whose edges are mapped to $E(\G)$ (so that $V(X) \cup V(Y) = V(H)$ but $E(X) \cup E(Y)$ is not $E(H)$; the edges counted in $b$ and $c$ are the ones in $E(H) - (E(X) \cup E(Y))$). Observe that $Y$ may contain isolated vertices. Denote by $y = |V(Y)|$. See Figure \ref{fig:structure_of_cores} below for an illustration of these definitions.     
    \begin{figure}[H]
        \centering
    \def\svgwidth{0.4\columnwidth}
    %% Creator: Inkscape 1.2.2 (b0a8486541, 2022-12-01), www.inkscape.org
%% PDF/EPS/PS + LaTeX output extension by Johan Engelen, 2010
%% Accompanies image file 'structure_of_cores.pdf' (pdf, eps, ps)
%%
%% To include the image in your LaTeX document, write
%%   \input{<filename>.pdf_tex}
%%  instead of
%%   \includegraphics{<filename>.pdf}
%% To scale the image, write
%%   \def\svgwidth{<desired width>}
%%   \input{<filename>.pdf_tex}
%%  instead of
%%   \includegraphics[width=<desired width>]{<filename>.pdf}
%%
%% Images with a different path to the parent latex file can
%% be accessed with the `import' package (which may need to be
%% installed) using
%%   \usepackage{import}
%% in the preamble, and then including the image with
%%   \import{<path to file>}{<filename>.pdf_tex}
%% Alternatively, one can specify
%%   \graphicspath{{<path to file>/}}
%% 
%% For more information, please see info/svg-inkscape on CTAN:
%%   http://tug.ctan.org/tex-archive/info/svg-inkscape
%%
\begingroup%
  \makeatletter%
  \providecommand\color[2][]{%
    \errmessage{(Inkscape) Color is used for the text in Inkscape, but the package 'color.sty' is not loaded}%
    \renewcommand\color[2][]{}%
  }%
  \providecommand\transparent[1]{%
    \errmessage{(Inkscape) Transparency is used (non-zero) for the text in Inkscape, but the package 'transparent.sty' is not loaded}%
    \renewcommand\transparent[1]{}%
  }%
  \providecommand\rotatebox[2]{#2}%
  \newcommand*\fsize{\dimexpr\f@size pt\relax}%
  \newcommand*\lineheight[1]{\fontsize{\fsize}{#1\fsize}\selectfont}%
  \ifx\svgwidth\undefined%
    \setlength{\unitlength}{222.16719259bp}%
    \ifx\svgscale\undefined%
      \relax%
    \else%
      \setlength{\unitlength}{\unitlength * \real{\svgscale}}%
    \fi%
  \else%
    \setlength{\unitlength}{\svgwidth}%
  \fi%
  \global\let\svgwidth\undefined%
  \global\let\svgscale\undefined%
  \makeatother%
  \begin{picture}(1,0.98288956)%
    \lineheight{1}%
    \setlength\tabcolsep{0pt}%
    \put(0,0){\includegraphics[width=\unitlength,page=1]{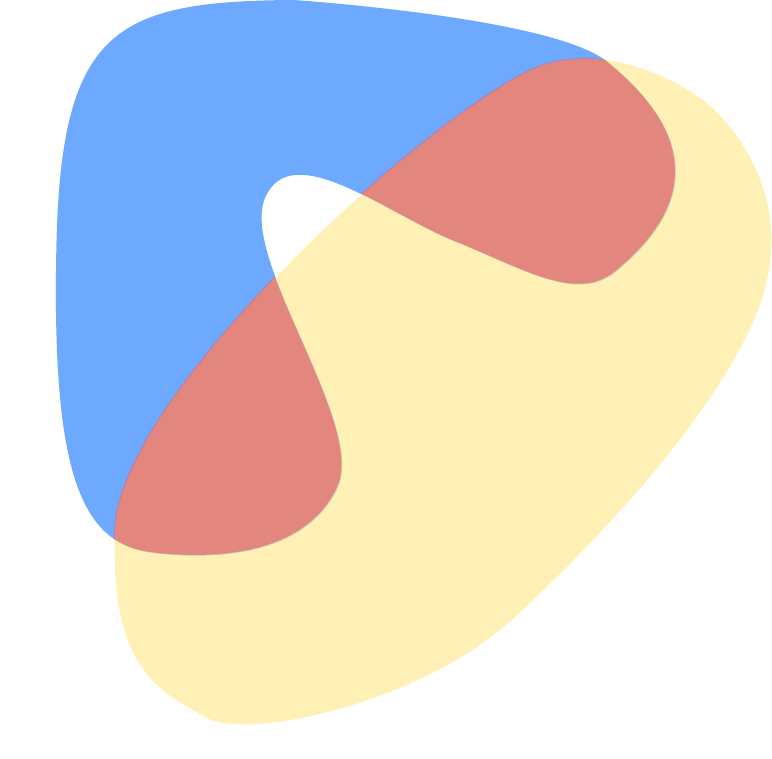}}%
    \put(0.16617811,0.83929087){\color[rgb]{0,0,0}\makebox(0,0)[lt]{\lineheight{1.25}\smash{\begin{tabular}[t]{l}$X$\end{tabular}}}}%
    \put(-0.00251162,0.00234962){\color[rgb]{0,0,0}\makebox(0,0)[lt]{\lineheight{1.25}\smash{\begin{tabular}[t]{l}$Y$\\\end{tabular}}}}%
    \put(0.83437207,0.29711841){\color[rgb]{0,0,0}\makebox(0,0)[lt]{\lineheight{1.25}\smash{\begin{tabular}[t]{l}$G^*$\end{tabular}}}}%
    \put(0,0){\includegraphics[width=\unitlength,page=2]{structure_of_cores.pdf}}%
    \put(0.08154504,0.94967995){\color[rgb]{0,0,0}\makebox(0,0)[lt]{\lineheight{1.25}\smash{\begin{tabular}[t]{l}$\H$\end{tabular}}}}%
    \put(0.19304449,0.63626579){\color[rgb]{0,0,0}\makebox(0,0)[lt]{\lineheight{1.25}\smash{\begin{tabular}[t]{l}$b$\\\end{tabular}}}}%
    \put(0.7839363,0.81933256){\color[rgb]{0,0,0}\makebox(0,0)[lt]{\lineheight{1.25}\smash{\begin{tabular}[t]{l}$c$\\\end{tabular}}}}%
    \put(0,0){\includegraphics[width=\unitlength,page=3]{structure_of_cores.pdf}}%
  \end{picture}%
\endgroup%

        \caption{Variables in the proof of \rlem{lem:manycopies}.}
        \label{fig:structure_of_cores}
    \end{figure}
We will now upper-bound the contribution of all copies $\H$ which satisfy the configuration specified by $(b, c, X, Y)$.  Observe that

    \begin{align*}
        x + y &= q,\\
        |E(Y)| + |E(X)| + b + c &= E(H) = q\D/2,\\
        2|E(X)| + b &= \D|V(X)| = x\D.
    \end{align*}
    Solving these equations we get
    \begin{align*}
        |E(X)| &= \f{x\D - b}{2},\\
        |E(Y)| &= \f{q\D}{2} - b - c - \f{x\D}{2} + \f{b}{2} = \f{q\D - x\D - b - 2c}{2}.
    \end{align*}
    Since $b + |E(X)| + c$ edges come from outside $\G$, for each such $\H$, $p_n^{|\H - \G|}$ is at most
    \begin{align*}
        C n^{-\f{2}{\D}\cdot(b + |E(X)| + c)} = C n^{-\f{2}{\D}\cdot\f{x\D + b + 2c}{2}} = C n^{-x - \f{b}{\D} - \f{2c}{\D}}.
    \end{align*}
    Now we need to count the number of such copies of $\H$. Firstly, the image of $X$ can be chosen in (at most) $n^x$ ways, which can be included in the ``cost'' to obtain $n^{-\f{b}{\D} - \f{2c}{\D}}$. The final cost will be the product of this with the number of copies of $Y$ in $\G$ (fixing it will fix all the vertices, and therefore all the edges counted in $b$ and $c$). 

    To count the number of copies of $Y$ in $\G$, observe that $Y$ being a subgraph of $H$ has max-degree $\D$. So we can apply \rlem{lem:holder} to obtain that the count is at most 
    \[ C e^{|E(Y)|/\D} v^{y - 2|E(Y)|/\D} \] 
    where $v = v(\G)$ and $e = e(\G)$. Putting everything together, the total cost is at most
    \begin{align*}
        C e^{\f{q - x}{2} - \f{b + 2c}{2\D}} v^{q - x - 2\f{q - x}{2} + \f{b + 2c}{\D}} n^{-\f{b}{\D} - \f{2c}{\D}} = C e^{\f{q - x}{2} - \f{b + 2c}{2\D}} v^{\f{b + 2c}{\D}} n^{-\f{b + 2c}{\D}} \leq C e^{\f{q}{2} - \f{x + b + 2c}{2\D}}
    \end{align*}
    as $v \leq n$ and $\D \geq 1$. If $b + 2c + x\geq 1$, then this is $\o(k_n)$ because $e = \O(k_n^{2/q})$ (and $k_n \gg \poly\log n$ as assumed at the beginning of this section). But this is indeed the case since $H$ is connected and $|\H - \G| \geq 1$,  implying that the contribution of the part of the sum corresponding to the configuration $(b, c, X, Y)$ is $\o(k_n)$. 

    Considering all possible distinct configurations of $(b, c, X, Y)$ (of which there are $C = C(H)$ many), we see that the total contribution of the second term in the first display of the proof is $\o(k_n)$. This concludes the proof.
\end{proof}

As indicated in the idea of proof section, the following ``product of degrees'' lemma is a key combinatorial input in our proof, derived via an application of Finner's inequality.

\begin{lem}[Product of degrees]
    \label{lem:product}
    There is a function $g(n) = \Om(k_n^{1/q})$, depending only on $H$, such that the following holds for all sufficiently large $n$. Suppose $\G$ is a core. For every edge $f = (a, b) \in E(\G)$, we have $d_a d_b \geq g^2(n)$ and $d_a, d_b \geq \D$. Therefore every vertex in a core has degree at least $\D$ since none of them are isolated.
\end{lem}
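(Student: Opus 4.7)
The plan is to derive both inequalities by confronting property (3) of Definition \ref{def:core} with the upper bounds from \rlem{lem:fixedholder}. First, translate (3) into a useful form: for any edge $f = (a,b) \in E(\G)$, copies of $H$ avoiding $f$ contribute identically to $\E_\G Q$ and $\E_{\G - f} Q$, while copies through $f$ see the marginal of $f$ change from $1$ down to $p_n$. Writing $\Q_H^f$ for the number of copies of $H$ in $G$ containing $f$, a direct calculation gives
\begin{align*}
    \E_\G Q - \E_{\G - f} Q = (1 - p_n)\, \E_\G \Q_H^f,
\end{align*}
so condition (3) becomes $\E_\G \Q_H^f \geq (1 - p_n)^{-1} t_n = \Om(k_n^{(q-2)/q})$.

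For the product-of-degrees bound, apply the first inequality of \rlem{lem:fixedholder} to the model in which $\G$ is planted in $G \sim \cG(n, p_n)$. With $p = p_n = n^{-2/\D}$ one has $np^\D = n^{-1}$ and $n^2 p^\D = 1$; since $d_a, d_b \geq 1$ (because $f \in E(\G)$) and $e(\G) \geq 1$, these additive terms are absorbed into constants, leaving
\begin{align*}
    \Om(k_n^{(q-2)/q}) \leq C(H)\, d_a^{(\D-1)/\D} d_b^{(\D-1)/\D} e(\G)^{q/2 - 2 + 1/\D}.
\end{align*}
Substituting $e(\G) \leq \O(k_n^{2/q})$ from property (2) and simplifying exponents yields $(d_a d_b)^{(\D-1)/\D} \geq \Om(k_n^{2(\D-1)/(q\D)})$, i.e.\ $d_a d_b \geq \Om(k_n^{2/q}) =: g^2(n)$.

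For the minimum-degree bound $d_a \geq \D$ (the case $d_b \geq \D$ being symmetric), argue by contradiction: suppose $d_a \leq \D - 1$. Since $H$ is $\D$-regular, every copy of $H$ in $G$ containing $f$ must use $\D$ edges at $a$, of which at most $d_a < \D$ can lie in $\G$; thus every such copy has at least one edge incident to $a$ outside $\G$. Hence $\E_\G \Q_H^f \leq \E \Q'_H$, which by the second part of \rlem{lem:fixedholder} is at most $C(H)\max(B_1, B_2, B_3)$. Using $p_n n^{1/\D} = n^{-1/\D}$, $p_n n^{2/\D} = 1$, the bound $d_b \leq 2e(\G) \leq \O(k_n^{2/q})$, and the fact that $d_a \leq \D - 1$ contributes only $O(1)$ in the factors $d_a^{(\D-2)/\D}$ and $d_a^{(\D-1)/\D}$, a direct exponent calculation (entirely analogous to the previous paragraph) gives
\begin{align*}
    B_1, B_2 \leq \O(k_n^{(q-2)/q}) \cdot n^{-1/\D}, \qquad B_3 \leq \O(k_n^{(q-2)/q - 2/(q\D)}).
\end{align*}
Both are $\o(t_n)$ since $k_n \gg \pln$ and $n^{-1/\D}$ is polynomially small in $n$, contradicting $\E_\G \Q_H^f \geq (1 - p_n)^{-1} t_n$. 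Therefore $d_a \geq \D$, and by symmetry $d_b \geq \D$.

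The main delicate point is the exponent bookkeeping in the last paragraph: it is precisely the identity $\tfrac{2(\D-1)}{q\D} \cdot \tfrac{\D}{\D-1} = \tfrac{2}{q}$ that lines up the Finner exponents with $t_n$, and the polynomial-in-$n$ saving $n^{-1/\D}$ (for $B_1, B_2$) or the polynomial-in-$k_n$ saving $k_n^{-2/(q\D)}$ (for $B_3$) is what ensures the upper bound strictly falls below $t_n$ once $d_a$ dips below $\D$. This is what pinpoints $\D$ as the correct minimum-degree threshold and also explains why the weaker first bound $d_a d_b = \Om(k_n^{2/q})$ does not by itself control the individual degrees.
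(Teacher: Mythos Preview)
Your proposal is correct and follows essentially the same route as the paper: translate condition (3) of the core definition into a lower bound on the expected number of copies of $H$ through $f$, then invoke the first bound of \rlem{lem:fixedholder} to get $d_ad_b \geq \Om(k_n^{2/q})$, and for the minimum-degree claim argue that $d_a < \D$ forces an edge outside $\G$ so that the second bound of \rlem{lem:fixedholder} applies and yields $\o(t_n)$, a contradiction. The only cosmetic difference is that you note the missing edge must be incident to $a$ (which would allow bounding by $B_1$ alone), but then, like the paper, you bound by $\max(B_1,B_2,B_3)$ anyway.
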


%%%%%%%%%%%%%%%%%%%%%%%%%%%%%%%%%%%%%%%
Every vertex which is a part of some copy of $H$ in $\G$ must have degree at least $\D$. But the lemma above does not claim that every vertex in $\G$ is a part of some copy of $H$, just the weaker statement that every vertex has degree at least $\D$. Specifically, it does not rule out the case of a vertex which has a very large degree in $\G$ but is not  part of any copy of $H$ in $\G$.
\begin{proof}
    This proof is a direct application of \rlem{lem:fixedholder}. Observe that
    \begin{align*}
         \E_\G \Q - \E_{\G - f} \Q &= \sum_{\H} p_n^{|\H - \G|} - \sum_{\H} p_n^{|\H - (\G - f)|}
        \intertext{where $|\H - \G|$ is again a shorthand for the number of edges in $\H$ not in $\G$. Therefore the above is}
                                           &= \sum_\H p_n^{|\H - \G|} - p_n^{|\H - (\G - f)|}.
                                           \intertext{Now observe that for any $\H$ which does not contain $f$, the term is zero, and for any $\H$ containing $f$, $|\H - (\G - f)| = 1 + |\H - \G|$. Therefore, the above is}
                                           &= (1 - p_n)\sum_{\H \ni f} p_n^{|\H - \G|},
                                           \intertext{where we sum over all $\H$ containing $f$. This is bounded by}
                                           &\leq \sum_{\H \ni f} p_n^{|\H - \G|}.
                                           \intertext{This last expression is the expected number of copies of $\H$ containing the edge $f$ \fix{given that $\G \subseteq G$}. We may now invoke \rlem{lem:fixedholder} to bound the above by:}
                                           &\leq C(H)(d_a + np_n^\D)^{\f{\D - 1}{\D}}(d_b + np_n^\D)^\f{\D - 1}{\D}(e + n^2p_n^\D)^{\f{q}{2} - 2 + \f{1}{\D}},
                                           \intertext{where $d_a, d_b \geq 1$ are the degrees of $a, b$ respectively in $\G$ and $e = e(\G)$. Recall that $p_n^\D = n^{-2}$, so that by modifying the constants, we can write}
                                           &\leq C(H) (d_a d_b)^\f{\D - 1}{\D} e^{\f{q}{2} - 2 + \f{1}{\D}}.
                                           \intertext{Since $\G$ is a core, $e = \O(k_n^{2/q})$, so that}
                                           &\leq (d_a d_b)^{\f{\D - 1}{\D}} \O\Rnd{k_n^{1 - \f{4}{q} + \f{2}{q\D}}}.
    \end{align*}
    Definition \ref{def:core} and the preceding chain of inequalities then imply
    \begin{align*}
        t_n = \Om\Rnd{k_n^{1 - \f{2}{q}}} \leq \E_\G \Q - \E_{\G - f} \Q \leq (d_a d_b)^{\f{\D - 1}{\D}} \O\Rnd{k_n^{1 - \f{4}{q} + \f{2}{q\D}}},
    \end{align*}
    and therefore,
    \begin{align*}
        (d_a d_b)^\f{\D - 1}{\D} \geq \Om\Rnd{k_n^{\f{2}{q} - \f{2}{q\D}}} \iff d_a d_b \geq \Om(k_n^{2/q}).
    \end{align*}
    Observe that this lower bound is uniform over all cores, and just depends on $H$. We define this function to be $g^2(n)$, completing the proof of the first claim.

    For the other part, we apply the second half of \rlem{lem:fixedholder}. Suppose $d_a < \D$. Then (given $\G \subseteq G$) there is no copy of $H$ in $\G$ containing $f$ because $H$ is $\D$-regular. Then every copy of $H$ in $G$ containing $f$ must have some other edge coming from outside $\G$. The number of such copies is bounded above in expectation, using \rlem{lem:fixedholder}, by $C(H)\max(B_1, B_2, B_3)$ where $B_1, B_2, B_3$ are as defined in the lemma. Observe that $d_b \leq e(\G) = \O(k_n^{2/q})$, so that we can bound $B_1, B_2, B_3$ as follows:
         \begin{align*}
         B_1 & \df (d_a + np^\D)^{\f{\D - 2}{\D}}(d_b + np^\D)^{\f{\D - 1}{\D}}(e + n^2 p^\D)^{q/2 - 2 + 1/\D} pn^{1/\D} \\
             & \leq \O\Rnd{\Rnd{k_n^{2/q}}^{\f{\D - 1}{\D} + \f{q}{2} - 2 + \f{1}{\D}}} n^{-1/\D} = \O(k_n^{1 - 2/q}) n^{-1/\D} = \o(k_n^{1 - 2/q}), \\
         B_2 & \df (d_a + np^\D)^{\f{\D - 1}{\D}}(d_b + np^\D)^{\f{\D - 2}{\D}}(e + n^2 p^\D)^{q/2 - 2 + 1/\D} pn^{1/\D} \\
             & \leq \O\Rnd{\Rnd{k_n^{2/q}}^{\f{\D - 2}{\D} + \f{q}{2} - 2 + \f{1}{\D}}} n^{-1/\D} = \O(k_n^{1 - 2/q}) n^{-1/\D} = \o(k_n^{1 - 2/q}), \\
         B_3 & \df (d_a + np^\D)^{\f{\D - 1}{\D}}(d_b + np^\D)^{\f{\D - 1}{\D}}(e + n^2 p^\D)^{q/2 - 2} pn^{2/\D} \\
             & \leq \O\Rnd{\Rnd{k_n^{2/q}}^{\f{\D - 1}{\D} + \f{q}{2} - 2}} = \O\Rnd{k_n^{1 - \f{2}{q} - \f{2}{q\D}}} = \o(k_n^{1 - 2/q}),
    \end{align*}
    because $k_n$ grows superpolylogarithmically. Therefore the expectation is bounded by $\o(k_n^{1 - 2/q})$. But, as in the proof of the first claim, this must be at least $t_n = \Om(k_n^{1 - 2/q})$. This is a contradiction (for sufficiently large $n$), proving the claim.
\end{proof}

The next lemma proves the theorem of this paper in the case of large $k_n$. As mentioned earlier, combined with \rlem{lem:few}, this will complete the proof of the main theorem. 

The following is a high-level overview of the main steps involved. The proof proceeds via successively specializing the class of cores whose (collective) probability we have to bound. We bi-partition the vertices of the core into high degree and low degree vertices and  using \rlem{lem:product} ensure that edges from low degree vertices can only go to sufficiently high degree vertices. Since there are a relatively small number of high degree vertices, we can fix all of them in $V(K_n) = [n]$ (without incurring too high a cost from the union bound). At this point the case where at least half of the edges in the core comes from the edges within this high-degree set is dealt with using straightforward binomial tail probabilities, leaving us with the more complicated case when at least half of the edges go between the high and low degree sets. The remainder of the proof assumes that this is the case. Now we attempt to fix all the low degree vertices as well. But the straightforward bound of $e = E(\G)$ on their count is too lossy, thus necessitating further control. However, by a dyadic decomposition, we can fix all of them except the ones with very low degree. But vertices with extremely low degree can have edges only to vertices with very high degree, again by \rlem{lem:product}, \emph{whose} number is very small allowing us to also fix all the neighbor sets of these low degree vertices (and therefore their degrees as well). However, since each vertex in the core has degree at least $\D$ (\rlem{lem:product}), the number of such low degree vertices cannot exceed $1/\D$ times the number of edges between the high and low degree sets. This improved bound by a factor of $1/\D$ indeed turns out to be sufficient to balance the cost of fixing the low degree vertices against the probability of these edges actually existing (recall that we have already fixed the neighbor sets for these vertices). A calculation then shows that this works out favorably, finishing the proof.

\begin{lem}
    \label{lem:many}
    Assume $k_n$ grows fast enough (as described at the beginning of this section). The probability that $\cG(n, p_n)$ has a core is at most $C\exp(-C' k_n^{2/q}\log n)$ for constants $C, C'$ depending on $H$. As a consequence, the probability of $Q \geq k_n$ also satisfies the same bound (up to changing $C$).
\end{lem}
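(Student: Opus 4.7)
The aim is to bound $\P(\cG(n, p_n) \text{ has a core}) \leq \sum_\G p_n^{e(\G)}$, with the sum over candidate cores, by $C\exp(-C' k_n^{2/q}\log n)$. Since $p_n^{e(\G)} = \exp(-\f{2}{\D} e(\G)\log n)$ and $e(\G) \geq C k_n^{2/q}$ by \rlem{lem:manycopies}, it suffices to enumerate cores at any fixed edge count $e$ at a cost $\exp(o(e\log n))$. The naive count $\binom{\binom{n}{2}}{e}$ is too lossy, so we must leverage the structural constraints from \rlem{lem:product}: $d_a d_b \geq g(n)^2 = \Om(k_n^{2/q})$ on every edge, and minimum degree $\D$ on $V(\G)$.

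The plan is to partition $V(\G) = A \sqcup B$ with $A = \{v : d_v \geq g(n)\}$, so \rlem{lem:product} forces $e(\G) = e_A + e_{AB}$ (no $B$-$B$ edges). Since $|A| \leq 2e(\G)/g(n)$, enumerating $A$ costs only $n^{|A|} = \exp(O(e(\G)\log n/g(n))) = \exp(o(e(\G)\log n))$. I then split into subcases $e_A \geq e(\G)/2$ or $e_{AB} \geq e(\G)/2$. The first is handled by a Chernoff-type tail bound: once $A$ is fixed, the $e_A$ edges live in $\binom{|A|}{2}$ independent slots each present with probability $p_n$, and the probability that at least $e_A$ of them are present is $\leq (C|A|^2 p_n/e_A)^{e_A}$. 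Using $|A|^2 \leq 4e(\G)^2/g(n)^2 = \O(k_n^{2/q})$ and $p_n = n^{-2/\D}$ yields a combined exponent $-\Omega(e_A\log n)$, which dominates the $n^{|A|}$ enumeration cost.

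The subcase $e_{AB} \geq e(\G)/2$ is where the main work lies. The minimum-degree bound $d_v \geq \D$ on $V(\G)$ from \rlem{lem:product} immediately yields $|B| \leq e_{AB}/\D$, so fixing $B \subseteq [n]$ costs only $n^{|B|} = \exp((e_{AB}/\D)\log n)$, which is exactly half of the probabilistic exponent $\log p_n^{-e_{AB}} = (2 e_{AB}/\D)\log n$, leaving a budget of $(e_{AB}/\D)\log n$ for neighborhood-fixing. To fit into this budget, I decompose $B = \bigsqcup_j B_j$ dyadically by degree, $B_j = \{b : 2^j \leq d_b < 2^{j+1}\}$. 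By \rlem{lem:product}, every $\G$-neighbor of $b \in B_j$ has degree at least $g(n)^2/2^{j+1}$ and hence lies in a set $A_j$ of size $\leq 2^{j+2} e(\G)/g(n)^2$. Using $\binom{|A_j|}{d_b} \leq (C|A_j|/d_b)^{d_b}$ together with $|A_j|/d_b \leq 4e(\G)/g(n)^2 = \O(1)$, the total neighborhood log-cost (summed over $j$) amounts to $O(e_{AB}\log\log n) = o(e_{AB}\log n)$. Thus the net Case 2 exponent is $-\Omega(e_{AB}\log n/\D)$, and combining Cases 1 and 2 gives $\exp(-\Omega(e(\G)\log n/\D))$.

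Summing over admissible $e \geq Ck_n^{2/q}$ is geometric and is dominated by the smallest value $e \sim k_n^{2/q}$, yielding the claimed $\exp(-\Omega(k_n^{2/q}\log n))$ bound on the probability of a core; since \rlem{lem:seed} already reduces $\P(\Q \geq k_n)$ to $\P(\text{core})$ up to a $(1+\xi_n)$ factor, this completes the theorem. The main obstacle will be Case 2's delicate constant-accounting: verifying that the factor-$1/\D$ savings from the minimum-degree bound really yields multiplicative $1/2$ slack against $p_n^{e_{AB}}$, that the neighborhood cost genuinely reduces to an $\O(1)$ per-edge factor via $|A_j|/d_b = \O(1)$, and that the auxiliary enumerations (of degree sequences and dyadic level labels, plus the $e_A$ contribution when $e_A > 0$) remain in the lower-order noise. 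This is the precise bookkeeping called out in the paper's overview as the crux of the argument.
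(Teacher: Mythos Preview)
Your proposal follows the same architecture as the paper's proof: partition vertices into high-degree $A$ and low-degree $B$ at threshold $g(n)$, handle Case~1 ($e_A \geq e/2$) by a binomial tail after fixing $A$ cheaply, and handle Case~2 ($e_{AB} \geq e/2$) via the min-degree bound $|B| \leq e_{AB}/\D$ together with cheap neighbor-set enumeration driven by \rlem{lem:product}. Your Case~2 is in fact a streamlining of the paper's: the paper further splits the low-degree set $R$ into pieces $R_1,\dots,R_{m-s}$ (fixed at cost $n^{h(n)}$ with $h(n)=o(k_n^{2/q})$, their edges then handled by binomial tails) and a residual $R'$ (handled via min-degree plus neighbor sets inside the polylog-sized $L'$), whereas you treat all of $B$ uniformly via the single observation $|A_j|/d_b \leq 4e(\G)/g(n)^2 = \O(1)$. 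This avoids the auxiliary parameter $s$ and the secondary case-split on $e'$ versus $e''$, and is a cleaner execution of the same idea.

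One point in your bookkeeping needs attention. When you enumerate neighbor sets of $b \in B_j$ as subsets of $A_j := \{a : d_a \geq g(n)^2/2^{j+1}\}$, this set depends on the full degree sequence of $\G$, which is \emph{not} determined by fixing $A$ merely as a subset of $[n]$: the degrees $d_a$ for $a \in A$ depend on the as-yet-unspecified $A$--$A$ edges as well as on the very $A$--$B$ edges you are in the process of enumerating. The remedy is exactly the paper's ``fix $L_1,\dots,L_m$'' step: additionally record the dyadic degree level of each $a \in A$, at extra enumeration cost $m^{|A|} = \exp\bigl(O(|A|\log\log n)\bigr) = \exp\bigl(o(k_n^{2/q})\bigr)$, after which each $A_j$ is determined (up to harmless dyadic rounding) by the fixed data and your size bound $|A_j| \leq 2^{j+2}e(\G)/g(n)^2$ becomes usable. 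With this addition your argument goes through, and indeed yields a somewhat shorter proof than the paper's.
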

\begin{proof}
    \newcommand{\m}{\mathrm{max}}
    Let $g(n) = \Om(k_n^{1/q})$ be the function in \rlem{lem:product}, and suppose $\G \subseteq K_n$ is a core. Let $L$ be the set of \emph{high-degree vertices} in $\G$ defined to be those with degree $\geq g(n)$ in $\G$, and set $R = V(\G) - L$. By \rlem{lem:product} every edge in $\G$ must have an endpoint in $L$. Also we have $|L| \leq L_\m$ where $L_\m = \O(k_n^{1/q})$ because $e(\G) = \O(k_n^{2/q})$. Note that the implicit $\poly\log n$ factor in $\O(k_n^{1/q})$ does not depend on $\G$. 

    Now recall that a core has at least $C k_n^{2/q}$ edges as was proved in \rlem{lem:manycopies}. For brevity let $e = e(\G)$. Since all these edges have an endpoint in $L$, either at least $e/2$ of them have both their endpoints in $L$ or at least $e/2$ of them have one endpoint in $L$ and the other in $R$. The probability that at least $e/2$ edges come from within $L$ is bounded by the probability that there is a set of size at most $L_\m$ containing at least $C k_n^{2/q}$ edges (with a different $C$). For a fixed set of size at most $L_\m$, the probability it has so many edges is at most
    \begin{align*}
        \P[\Bin\Rnd{\binom{L_\m}{2}, p_n} \geq C k_n^{2/q}] &\leq \P[\Bin(L_\m^2/2, p_n) \geq C k_n^{2/q}]
        \intertext{which may be upper bounded using \rlem{lem:bintails} from the Appendix (since $p_n \ll \f{Ck_n^{2/q}}{L_\m^2}$) to obtain}
                                                            &\leq \Exp{-C' k_n^{2/q} \log\Frac{C k_n^{2/q}}{L_\m^2p_n}}, \quad \text{$C, C'$ constants.}
                                                 \intertext{Recalling that $L_\m = \O(k_n^{1/q})$, we can conclude that the above is therefore at most}
                                                 &\leq \Exp{-C k_n^{2/q} \log n}, \quad 
                                                 \fix{\text{because $Ck_n^{2/q}/L_\m^2 \geq 1/\poly\log n$},}
    \end{align*}
    for sufficiently large $n$. The total number of ways of picking such a subset of size at most $L_\m$ from $V(K_n)$ is at most
    \begin{align*}
        n^{L_\m} \leq \Exp{k_n^{1/q}\pln},
    \end{align*}
    so that the probability that there is a subset of vertices of size at most $L_\m$ with at least $e/2$ edges inside it is at most
    \begin{align}
        \label{eq:lesse2}
        \Exp{k_n^{1/q}\pln - Ck_n^{2/q}\log n} \leq C'\Exp{-Ck_n^{2/q}\log n}.
    \end{align} 
    Therefore for the class of cores satisfying this condition, we have the desired bound. 

    In the sequel we will prove a similar bound for cores which do not satisfy this condition and hence have fewer than $e/2$ edges coming from within $L$ so that there are at least $e/2$ edges between $L$ and $R$. Partition $L$ into $m = O(\log n)$ pieces dyadically depending on the degree, i.e., define
    the (disjoint) sets
    \begin{align*}
        L_i \df \{v \in L : g(n) 2^{i - 1} \leq d_v < g(n)2^i \}, \quad i = 1, 2, \ldots, m 
    \end{align*}
    where $d_v$ is the degree of $v$ in $\G$. Note that in fact, we can choose $m$ such that 
    \begin{align}
        \label{eq:mdef}
        10\rho(n) < 2^m g(n) \leq 20\rho(n)
    \end{align}
    where $\rho(n)$ is the maximum number of edges in a core as in Definition \ref{def:core}, because this ensures that the sets $L_{m + 1}, \ldots$ are empty by the bound $d_v \leq e$ for all $v \in \G$ (observe that this definition does not depend on the choice of core, but only on $n$, $H$ and $k_n$).
    Also define the nested sets:
    \begin{align*}
        R_i \df \{v \in R : d_v \geq g(n) 2^{-i} \}, \quad i = 1, 2, \ldots, m
    \end{align*}
    so that $R_1 \subseteq R_2 \subseteq \ldots$ (see Figure \ref{fig:dyadic}). Invoking \rlem{lem:product}, observe that every edge $(a, b)$ between $L$ and $R$ goes from a vertex in $L_i$ to one in $R_i$, for some $i$, and that 
    \begin{align}
        \label{eq:ribnd}
        |R_i| \leq \f{2^{i + 1}e}{g(n)}.
    \end{align}
    
    % Consider a collection of nonnegative integers $m, (l_1, l_2, \ldots, l_m), (r_1, \ldots, r_m), (e_1, \ldots, e_m)$ such that $m \leq \log n$, and $\sum_i l_i, \sum_i r_i \leq n, \sum_i e_i \leq n^2$. Then the total number of distinct such collections is at most $\exp(\pln)$. Therefore, if we consider the class of cores for which the values of $m$ and $e_i$ (as in the previous paragraph) are as specified by this collection, and $|L_i| = l_i, |R_i| = r_i, i = 1, \ldots, m$, and prove that the probability that some core from this class (which also has atleast $e/2$ edges in between $L$ and $R$) occurs is at most
    % \begin{align*}
    %     C\Exp{-C' \sum_i e_i \log n}
    % \end{align*}
    % then we would be done, because for any such core, $\sum_i e_i \geq e/2 \geq C k_n^{2/q}$. Going forward, we fix this collection of numbers.

First we fix $m, e$ and all the $\{e_{ij}\}_{i, j \leq m}$ defined as 
    \begin{align*}
        e_{ij} \df \text{number of edges between $L_i$ and $R_j - R_{j - 1}$}, \quad R_0 \df \varnothing.
    \end{align*}
    The total number of such distinct sequences is at most $\exp(\poly\log n)$ because each of these is at most $n^2$, and there are $O(\log^2 n)$ numbers to fix. This factor will be negligible while applying a union bound, and hence it suffices to consider the class of cores which satisfy these choices. In the sequel we also use the notation
    \begin{align*}
        e_i \df \text{number of edges between $L_i$ and $R_i$} = \sum_{j = 1}^i e_{ij}.
    \end{align*}

    Now fix the sets $L_1, \ldots, L_m$. Since the total number of distinct choices of each $L_i$ is at most 
    \begin{align*}
        n^{L_\m} \leq \Exp{Ck_n^{1/q}\log n},
    \end{align*}
    the total number of ways to choose $L_1, \ldots, L_m$ is only
    \begin{align*}
        \Exp{Ck_n^{1/q}\log^2 n} = \Exp{o(k_n^{2/q})}
    \end{align*}
    so that we can fix these and bound the probability of the resulting subclass of cores, again using the fact that this factor will be negligible in a union bound.
    
    Note that by definition (see \eqref{eq:mdef})
    \begin{align*}
        2^m \leq C\frac{\rho(n)}{g(n)} \leq \f{\O(k_n^{2/q})}{\Om(k_n^{1/q})} = \O(k_n^{1/q}).
    \end{align*}
    Then there is a function $h(n) = o(k_n^{2/q})$ depending only on $H$ such that we can choose $s = \Theta(\log \log n)$ satisfying
    \begin{align}
        \sum_{i = 1}^{m - s} |R_i| \leq \sum_{i = 1}^{m - s} 2^{i + 1} \f{e}{g(n)} \leq 2^{m - s + 2} \f{e}{g(n)} = \f{\O(k_n^{1/q})\O(k_n^{1/q})}{2^s}\leq h(n),
        \label{eq:rbound}
    \end{align}
    and also for all $i > m - s$,
    \begin{align}
        \label{eq:libnd}
        |L_i| \leq \f{C e}{2^i g(n)} \leq C' \frac{e}{g(n)}\cdot\frac{g(n)}{\rho(n)} \cdot 2^s \leq C' \cdot 2^s \leq \poly\log n, \quad \text{$C, C'$ constants.}
    \end{align}
    Observe that $s$ only depends on $n$ and $H$, and is fixed beforehand. 
    For this choice of $s$, we can similarly fix $R_1, \ldots, R_{m - s}$ by first fixing the sizes of all the $R_i$ (which is a sequence of length $O(\log n)$ each of value at most $n$, so that total number of ways is $\exp(\pln)$) satisfying the constraint that
    \begin{align*}
        \sum_{i = 1}^{m - s} |R_i| \leq h(n), \quad \text{(same as in \eqref{eq:rbound})}
    \end{align*}
    and then choosing these sets as subsets of $[n]$ in 
    \begin{align*}
        \Exp{\sum_{i = 1}^{m - s} |R_i| \log n} = \Exp{o(k_n^{2/q} \log n)}\ \text{ways.} 
    \end{align*}
   
    \begin{figure}[H]
        \centering
    \def\svgwidth{0.4\columnwidth}
    \import{./figures/}{dyadic_split.pdf_tex}

        \caption{Variables in the proof of \rlem{lem:many} where $m - s = 3$.}
        \label{fig:dyadic}
    \end{figure}

    Denote by $L' \df \cup_{i > m - s} L_i$, $R' \df \fix{R - R_{m - s}}$, and $e'$ for the number of edges between $L'$ and $R'$ (see Figure \ref{fig:dyadic}). Observe that
    \begin{align*}
        e' = \sum_{m - s < i, j \leq m} e_{ij}.
    \end{align*}
    \fix{
        Since $|L'| \leq \poly\log n$ and $|R_{m - s}| \leq h(n)$, we may exclude the case of at least $e/4$ edges between $L'$ and $R_{m - s}$ by bounding the probability that there is any pair of sets $A, B \subseteq [n]$ of size at most $\poly\log n$ and $h(n) \leq o(k_n^{2/q})$ (respectively) such that they have at least $e/4 \geq Ck_n^{2/q}$ edges between them. We may choose the sizes of $A$ and $B$ and then $A$ and $B$ themselves in at most 
        \begin{align*}
            \Exp{\pln} \binom{n}{\poly\log n}\binom{n}{h(n)} \leq \Exp{o(k_n^{2/q})\log n}
        \end{align*}
        ways, and after fixing $A$ and $B$, the probability of there being at least $C k_n^{2/q}$ edges between them is at most
        \begin{align*}
            \P(\Bin(\poly\log n \cdot h(n), p_n) \geq C k_n^{2/q}) & \leq \Exp{-C k_n^{2/q}\log\f{C k_n^{2/q}}{\poly\log n \cdot h(n)\cdot n^{-2/\D}}} \\
                                                                   &\leq \Exp{-C k_n^{2/q} \log n}
        \end{align*}
        using \rlem{lem:bintails} and the bound $h(n) \leq o(k_n^{2/q})$.
        Therefore, by a union bound over all $A, B$, this probability is at most of the same order as our eventual bound, so we may restrict ourselves to the case when the number of edges between $L'$ and $R_{m - s}$ is at most $e/4$. Therefore, since 
        \begin{align*}
            \sum_{i = 1}^{m - s} e_i + e' + \textrm{(number of edges between $L'$ and $R_{m - s}$)} = (\textrm{number of edges between $L$ and $R$}) > e/2,
        \end{align*}
        we can restrict ourselves to the case when $\sum_{i = 1}^{m - s} e_i + e' > e/4$. We assume this in the sequel.
    }

    Note that due to our choices, $L'$ and $e'$ are fixed (since all the $L_i$ and $e_{ij}$ were fixed earlier), but $R'$ depends on the core. However every edge between $L$ and $R$ with an endpoint in $R'$ must go to $L'$. Fix the size $r' = |R'|$ (in $n$ ways) and then fix the degrees of the vertices in $R'$, say $d_1, \ldots, d_{r'}$. Since we must ensure $d_1 + \ldots + d_{r'} = e'$, and each degree $d_i \leq |L'| \leq \pln$, the number of ways to choose these is at most
    \begin{align*}
        (\pln)^{e'} \leq \exp(ce' \log \log n),
    \end{align*}
    where $c$ is an absolute constant. Being in a core, vertices in $R'$ must have degree at least $\D$ (due to \rlem{lem:product}). Now we also fix the neighbor sets $S_i$ (with $|S_i| = d_i$) of each these vertices, the number of ways of which is at most
    \begin{align*}
        |L'|^{\sum_{i = 1}^{r'} d_i} \leq \Exp{e' \log |L'|} \leq \Exp{c e' \log\log n}.
    \end{align*}
    Therefore, we can fix $r', d_1, \ldots, d_{r'}, S_1, \ldots, S_{r'}$ in a total of 
    \begin{align*}
        n \Exp{c e' \log \log n} 
    \end{align*}
    ways. We will prove a final probability bound of $\exp(-ce \log n)$ on this class of cores, so we can ignore these factors (observe that this bound is sufficient because $e \geq Ck_n^{2/q}$ as claimed earlier).

    For the final step of the proof, we first summarize the information we have now. For each distinct class of cores, we know
    \begin{itemize}
        \item $m = O(\log n)$, the number of dyadic pieces,
        \item $e$, the number of edges in the core,
        \item $L_1, \ldots, L_m$, all the pieces in the high-degree set,
        \item $R_1, \ldots, R_{m - s}$, all except the last $s$ pieces in the low-degree set,
        \item $e_1, \ldots, e_m$, the number of edges between every $L_i$ and $R_i$ (in fact we know all the $e_{ij}$),
        \item $\sum_{i = 1}^{m - s} e_i + e' \geq e/4$. Define $e'' = \sum_{i = 1}^{m - s} e_i$.
        \item $r'$, the size of $R'$.
        \item $d_1, \ldots, d_{r'}$, the degrees of vertices in $R'$.
        \item and $S_1, \ldots, S_{r'}$, their associated neighbor sets (which are known to be subsets of $L'$).
    \end{itemize}
    with a total count of $\Exp{o(k_n^{2/q}\log n)}$ many such classes. We now wish to show a total probability bound of $\Exp{-Ck_n^{2/q}\log n}$ on each such class of cores. Note that this is sufficient to complete the proof. For the rest of this argument, we fix such a class and show this claimed upper bound.

    Choose $R'$ and a bijection $\psi: [r'] \to R'$ in at most $\exp(r' \log n)$ ways. But each degree is at least $\D$, therefore $e' \geq r'\D \implies r' \leq e'/\D$, so that the total number of ways is at most $\exp(\f{e'}{\D}\log n)$. Fixing $R'$ and $\psi$ also fixes all the edges between $R'$ and $L'$ via the information about $S_1, \ldots, S_{r'}$ so that the probability of this class is then at most
\begin{align*}
    \Exp{-e' \log (1/p_n)} \prod_{i = 1}^{m - s} \P(\Bin(|L_i||R_i|, p_n) \geq e_i),
\end{align*}
where the second factor accounts for $e_i$ edges between $L_i$ and $R_i$.
Multiplying the number of ways to fix $R'$ and $\psi$ we get
\begin{align*}
    \Exp{\f{e'}{\D} \log n - \f{2}{\D}e'\log n} \prod_{i = 1}^{m - s}\P(\Bin(|L_i||R_i|, p_n) \geq e_i),
\end{align*}
where we substitute $p_n = n^{-2/\D}$. If $e' \geq e/8$, we are done, because the first factor itself is at most $\exp(-ce \log n) \leq \Exp{-Ck_n^{2/q}\log n}$. If not, we must have $e''\df \sum_{i = 1}^{m - s} e_i \geq e/8$. In that case the first term is at most 1 and the second term is at most
\begin{align*}
    \P[\Bin\Rnd{\sum_{i = 1}^{m - s}|L_i||R_i|, p_n} \geq e/8] & \leq \P[\Bin\Rnd{\O(k_n^{2/q}), p_n} \geq e/8]
    \intertext{because from \eqref{eq:ribnd} and \eqref{eq:libnd} we have $|L_i||R_i| \leq C\f{e^2}{g(n)^2} = \O(k_n^{2/q})$ and $m = O(\log n)$. Applying \rlem{lem:bintails} we bound the above by}
                                                               & \leq \Exp{-C e \log\Frac{e}{8\O(k_n^{2/q})n^{-2/\D}}} 
                                                               \intertext{using the fact that $p_n = n^{-2/\D} \ll e/\O(k_n^{2/q}) = \Om(1)$. So the quantity inside the $\log$ is $\Om(n^{2/\D})$ yielding the bound,}
                                                               & \leq \Exp{-C e\log n}
\end{align*}
for a constant $C$, as required.
\end{proof}

Combining the results of \rlem{lem:few} and \rlem{lem:many} finishes the proof of our main theorem, Theorem \ref{thm:main}.
%}}}1

%%%%%%%%%%%%%%%%%%%%%%%%%%%%%%%%%%%%%%%%%%%%%%%%%%%%%%%%%%%%%%%%%%%%%%%%%%%%%%%%%%%%%%%%%%%%%%%%%%%%%

\section{Appendix}
%{{{1

This appendix collects useful lemmas that were required in the proofs above.
\begin{lem}
    \label{lem:concave}
    Let $p > 1$ be a real. Then for any sequence of nonnegative reals $(x_i)_{i = 1}^n$ we have $\sum_i x_i^{1/p} \geq \Rnd{\sum_i x_i}^{1/p}$.
\end{lem}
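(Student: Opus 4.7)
The plan is to reduce to the normalized case and exploit the fact that $t \mapsto t^{1/p}$ is superlinear on $[0,1]$ when $1/p < 1$. First I would dispose of the trivial case $\sum_i x_i = 0$, where both sides vanish. Otherwise set $S \df \sum_i x_i > 0$ and define $y_i \df x_i / S \in [0,1]$, so that $\sum_i y_i = 1$.

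The key observation is that for any $y \in [0,1]$ and exponent $1/p \in (0,1)$, one has $y^{1/p} \geq y$, simply because raising a number in $[0,1]$ to a power smaller than $1$ can only make it larger (this is immediate from monotonicity of $t \mapsto y^t$ for $y \in (0,1)$, and is trivial for $y \in \{0,1\}$). Summing over $i$ yields
\begin{align*}
    \sum_i y_i^{1/p} \;\geq\; \sum_i y_i \;=\; 1.
\end{align*}
Multiplying through by $S^{1/p}$ and using $x_i^{1/p} = S^{1/p} y_i^{1/p}$ gives
\begin{align*}
    \sum_i x_i^{1/p} \;=\; S^{1/p} \sum_i y_i^{1/p} \;\geq\; S^{1/p} \;=\; \Rnd{\sum_i x_i}^{1/p},
\end{align*}
which is the desired inequality.

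There is no real obstacle here: the inequality is just the subadditivity of a concave function vanishing at the origin, and the normalization trick sidesteps any need for induction on $n$ or for differentiating $(a+b)^{1/p} - a^{1/p} - b^{1/p}$. The only thing to be mildly careful about is handling $y_i = 0$ (for which $y_i^{1/p} = 0 = y_i$ trivially) and the degenerate case $S = 0$ noted above.
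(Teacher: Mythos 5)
Your proof is correct and takes essentially the same approach as the paper: normalize by $S = \sum_i x_i$, use $y^{1/p} \geq y$ for $y \in [0,1]$, sum, and rescale. The only difference is that you explicitly handle the degenerate case $S = 0$, which the paper leaves implicit.
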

\begin{proof}
    Let $s = \sum x_i$, and $y_i = x_i/s$ for $i \in [n]$. Then $y_i \leq 1$, so $y_i^{1/p} \geq y_i$. Therefore $\sum_i y_i^{1/p} \geq \sum_i y_i = 1$. Substituting back the values of $y_i$, we obtain the result.
\end{proof}

\begin{lem}
    \label{lem:competition}
    For any $k \geq 2$ and $A > 0$,
    \begin{align*}
        \min_{0 \leq s \leq k} \Rnd{s\log s + A(k - s)^{2/q}} \geq c \min(k \log k, A k^{2/q}),
    \end{align*}
    where $c$ is an absolute constant.
\end{lem}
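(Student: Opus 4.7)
The plan is to split the domain at $s = 3k/4$ and show that in each of the two subintervals, one of the two summands in the objective already produces the desired lower bound.

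First I would handle $s \leq 3k/4$. Here $k - s \geq k/4$, so
\[
    A(k - s)^{2/q} \geq A (k/4)^{2/q} = 4^{-2/q} A k^{2/q}.
\]
Since $s \log s \geq 0$ (either with the convention $0 \log 0 = 0$ when $s = 0$, or trivially when $s \geq 1$, which is the effective regime in the application in \rlem{lem:few} where $s$ is a nonnegative integer), the full objective is at least $4^{-2/q} A k^{2/q} \geq 4^{-2/q} \min(k \log k, A k^{2/q})$.

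Next I would handle $s \geq 3k/4$. Since $k \geq 2$, this forces $s \geq 3/2 > 1$, placing $s$ in the range where $x \mapsto x \log x$ is strictly increasing. Hence $s \log s \geq (3k/4) \log(3k/4) = (3k/4)(\log k - \log(4/3))$. Using $\log k \geq \log 2$ to bound $\log(4/3)/\log k \leq \log_2(4/3)$, one obtains
\[
    s \log s \geq \tfrac{3}{4}\bigl(1 - \log_2(4/3)\bigr)\, k \log k = \tfrac{3}{4} \log_2(3/2) \cdot k \log k,
\]
which is a positive constant multiple of $k \log k$, and hence of $\min(k \log k, A k^{2/q})$.

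Combining the two cases yields the lemma with $c = \min\{4^{-2/q},\, (3/4) \log_2(3/2)\}$, an absolute constant depending only on $q$. The only mildly subtle point is the choice of threshold: a naive split at $s = k/2$ would fail at the boundary case $k = 2$, since there $s \log s$ vanishes at $s = 1 = k/2$, leaving neither summand under control. Shifting the threshold up to $3k/4$ forces $s$ to stay bounded away from $1$ by a fixed factor even when $k = 2$, which is exactly what is needed to extract a uniform positive constant from the $s \log s$ term; apart from this, the argument is elementary.
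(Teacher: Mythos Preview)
Your proof is correct and follows essentially the same approach as the paper: split at a fixed fraction of $k$ and bound by one summand on each side. The paper splits at $k/2$ and then handles $k=2$ by a separate manual check, whereas your threshold $3k/4$ absorbs that boundary case into the main argument; otherwise the two proofs are the same.
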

\begin{proof}
    Let $s_*$ be the optimal value of $s$. If $s_* \geq k/2$, then the above quantity is at least
    \begin{align*}
        \f{1}{2}k \log (k/2) \geq \f{1}{10} k \log k,
    \end{align*}
    as long as $k \geq 3$. If not, $k - s_* \geq k/2$ in which case the result is clear. Finally if $k = 2$, the result can be verified manually.
\end{proof}
% \begin{lem}
%     \label{lem:xlogx}
%     For nonnegative reals $x_1, \ldots, x_n$ such that $x_1 + \cdots + x_n = 1$ we have
%     \begin{align*}
%         \sum_{i = 1}^n x_i \log x_i \geq -\log n
%     \end{align*}
% \end{lem}
% \begin{proof}
%     The function $x \mapsto x \log x$ is convex, which by Jensen implies that
%     \begin{align*}
%         \f{1}{n}\sum_{i = 1}^n x_i \log (x_i) \geq \Rnd{\f{1}{n}\sum_{i = 1}^n x_i}\log\Rnd{\f{1}{n}\sum_{i = 1}^n x_i}
%     \end{align*}
%     which implies the result.
% \end{proof}
Next we prove an estimate about the tails of the binomial distribution. A related result already appeared in \cite[Lemma 3.3]{lubetzkyzhaovariational}.
\begin{lem}[Binomial tails]
    \label{lem:bintails}
Suppose $N = N(n), M = M(n) \to \infty$ and $p_n = o(M/N)$. Then
    \begin{align*}
        \P(\Bin(N, p_n) \geq M) \leq \Exp{-(1 - o(1)) M \log\Frac{M}{Np_n}}.
    \end{align*}
\end{lem}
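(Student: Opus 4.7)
The plan is to use a direct union bound argument, which is very clean when $p_n \ll M/N$. The key observation is that the event $\{\Bin(N,p_n) \geq M\}$ is exactly the event that some subset of size $M$ among the $N$ independent Bernoullis consists entirely of successes. Taking a union bound over all $\binom{N}{M}$ such subsets gives the classical estimate
\begin{align*}
    \P(\Bin(N, p_n) \geq M) \leq \binom{N}{M} p_n^M.
\end{align*}

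Next I would apply the standard inequality $\binom{N}{M} \leq (eN/M)^M$ to obtain
\begin{align*}
    \P(\Bin(N, p_n) \geq M) \leq \Rnd{\f{eNp_n}{M}}^M = \Exp{M - M\log\Frac{M}{Np_n}}.
\end{align*}
This already has the right exponential form; it remains only to absorb the additive $M$ into the leading factor.

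For the final step I would use the hypothesis $p_n = o(M/N)$, which is precisely the statement that $M/(Np_n) \to \infty$, hence $\log(M/(Np_n)) \to \infty$. Therefore $M = o\!\left( M \log(M/(Np_n)) \right)$, so
\begin{align*}
    M - M \log\Frac{M}{Np_n} = -(1 - o(1)) M \log\Frac{M}{Np_n},
\end{align*}
yielding the claim. There is essentially no obstacle here: the argument is a two-line Chernoff/union bound, and the only role of the assumption $M \to \infty$ is (together with $p_n = o(M/N)$) to ensure the $-1$ term in the exponent is negligible relative to the main term $M \log(M/(Np_n))$.
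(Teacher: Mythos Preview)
Your proof is correct. It takes a genuinely different route from the paper: the paper applies the Chernoff bound $\P(\Bin(N,p_n)\geq M)\leq \exp(-N H_{p_n}(M/N))$ and then verifies the asymptotic $H_p(x)\geq (1-o(1))x\log(x/p)$ for $p\ll x<1$ by expanding the KL divergence and controlling the second term. Your approach bypasses the KL analysis entirely, using instead the elementary union bound $\P(\Bin(N,p_n)\geq M)\leq \binom{N}{M}p_n^M$ together with $\binom{N}{M}\leq (eN/M)^M$. This is shorter and more self-contained; the Chernoff route is in principle sharper (it captures the exact large-deviation rate function), but since the statement only asks for the leading term $(1-o(1))M\log(M/(Np_n))$, the crude union bound loses nothing here. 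One small remark: the hypothesis $M\to\infty$ is actually not needed in your argument, since $p_n=o(M/N)$ alone forces $\log(M/(Np_n))\to\infty$ and hence $M=o(M\log(M/(Np_n)))$; your proof thus gives a slight strengthening.
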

\begin{proof}
    By Chernoff's method we know that,
    \begin{align*}
        \P(\Bin(N, p_n) \geq M) \leq \Exp{-N H_{p_n}(M/N)},
    \end{align*}
    where $H_p(t) = D_{\text{KL}}(\Ber(t)\ \|\ \Ber(p))$,
    so it suffices to show that $H_{p_n}\Frac{M}{N} \geq (1 - o(1))\f{M}{N}\log\Frac{M}{Np_n}$, which can be reduced to proving that for $p \ll x < 1$, we have 
    \begin{align*}
        H_p(x) \geq (1 - o(1))x \log (x/p).
    \end{align*}
    It is indeed well-known that $H_p(x) \sim x \log (x/p)$ in this regime, see for instance \cite[Lemma 3.3]{lubetzkyzhaovariational}. For the purposes of completeness we include a proof of this weaker variant.
    To see this recall that
    \begin{align*}
        H_p(x) = x\log\Frac{x}{p} + \underbrace{(1 - x)\log\Frac{1 - x}{1 - p}}_{< 0},
    \end{align*}
    so it suffices to show that 
    \begin{align*}
        (1 - x)\log\f{1 - p}{1 - x} = o(1) \cdot x \log\f{x}{p} \iff \f{1 - x}{x} \f{\log\f{1 - p}{1 - x}}{\log\f{x}{p}} \to 0 \iff \f{1 - x}{x}\f{\log\Rnd{1 + \f{x - p}{1 - x}}}{\log\f{x}{p}} \to 0.
    \end{align*}
    Now $\log(1 + x) \leq x$, so this is implied by
    \begin{align*}
        \f{1 - x}{x} \f{x - p}{1 - x} \log^{-1}(x/p) \to 0
    \end{align*}
    which is clear because $x - p \leq x$ and $x/p \to \infty$.
\end{proof}

%}}}1

% \bibliography{ref}
% \bibliographystyle{plain}

\renewcommand*{\bibfont}{\footnotesize}
\printbibliography 

%\section{Problem}
%%{{{
%Let $\cG(n, p)$ be an \ER graph with $p = p_n = n^{-2/\D}$, and let $H$ be a small $\D$-regular connected graph with $q = |V(H)|$ vertices. Denote by $\Q$ the number of copies of $H$ in $G \sim \cG(n, p)$ (we consider two copies distinct if they are distinct as edge sets in $G$). We wish to compute the asymptotics of the (log) upper-tail probability
%\begin{align*}
%    -\log \P(Q \geq k_n)
%\end{align*}
%for all possible values of $1 \leq k_n \leq O(n^q)$ (the maximum number of copies of $H$ in a graph of $n$ vertices). The choice of $p_n$ ensures that the expected number of copies of $H$, i.e., $\E[\Q]$ is $O(1)$, and therefore (since all connected regular graphs are strictly balanced, see below), the number of copies exhibit Poisson behavior, indicating that disjoint occurence is competitive upto constants in the exponent in the unconditioned situation.

%Our methods yield matching upper and lower bounds for the probability upto constants depending only on $q, \D$, and the nature of the probability exhibits an interesting ``disjoint-to-clique'' type phase transition, by which we mean that the competing mechanisms of disjoint occurence of $k_n$ copies (small $k_n$) and the occurence of a clique containing $k_n$ copies (large $k_n$) become competitive upto constants with the true rate in different regimes. We pin down this transition point upto constants (again depending on $q, \D$). This is the same (upto of course constants) as was proven in \cite{ganguly} for the case of $H = K_3$.
%%}}}

\end{document}